\definecolor{pivert}{rgb}{0.0,0.9,0.4}
\newcommand{\R}{{\mathbb R}}
\newcommand{\Z}{\mathbb{Z}}
\newcommand{\T}{\mathcal{T}}
\newcommand{\Pcal}{\mathcal{P}}
\newcommand{\E}{\mathbb{E}}
\newcommand{\norm}[1]{\left \| #1 \right \|}
\newtheorem{thr}{Theorem}
\newtheorem{rem}{Remark}
\newtheorem{lem}{Lemma}
\begin{document}

\markboth{J.MacLaurin, J.Salhi & S.Toumi}{Mean field dynamics of a Wilson-Cowan Neuronal Network with Nonlinear Coupling Term } 

\title{Mean Field Dynamics of a  Wilson-Cowan Neuronal Network with Nonlinear Coupling Term}

\author{James MacLAURIN\\\small{School of Physics, University of Sydeney}\\Jamil SALHI\\ \small{NeuroMathcomp, Inria Sophia Antipolis;Lamsin-Enit,Tunis El Manar University}\\Salwa Toumi\\\small{Lamsin-Enit,Tunis El Manar University, Insat, Carthage University }}



\maketitle


\begin{abstract}
In this paper we prove the propagation of chaos property for an ensemble of interacting neurons subject to independent Brownian noise. The propagation of chaos property means that in the large network size limit, the neurons behave as if they are probabilistically independent. The model for the internal dynamics of the neurons is taken to be that of Wilson and Cowan, and we consider there to be multiple different populations. The synaptic connections are modelled with a nonlinear `electrical' model. The nonlinearity of the synaptic connections means that our model lies outside the scope of classical propagation of chaos results. We obtain the propagation of chaos result by taking advantage of the fact that the mean-field equations are Gaussian, which allows us to use Borell's Inequality to prove that its tails decay exponentially.
\end{abstract}



\section{Introduction}
Activity in the human brain results from the activity of billions of interacting neurons 
\cite{wilson1972excitatory,wilson1973mathematical,amari1977dynamics,ermentrout1998neural}. Ensembles of neurons can often produce
 coherent behaviour at macroscopic levels, and therefore it is desirable to use mathematical techniques to rigorously 
understand how the macroscopic description emerges out of the neuron-level description \cite{amit1985spin,benachour1998nonlinear,malrieu2003convergence,deco2008dynamic,budhiraja2012large,touboul2012limits,riedler2013laws,luccon2014mean}. The macroscopic phenomenon 
 we are interested in studying is propagation of chaos. This describes the situation where a large ensemble of neurons is heavily interconnected and subject to a lot of external noise, resulting in the neurons behaving randomly, with the activity of any two neurons probabilistically uncorrelated. These models are often described as `mean-field' models, because as a consequence of the uncorrelated behaviour of the neurons, their average activity is well-approximated by the mean activity of any particular neuron \cite{bressloff2012spatiotemporal}.

Mean-field models of neurons are typically a stochastic differential equation with three components:
 an internal dynamics term $\mathfrak{b}(X^j_t)$, an interaction term $ \frac{1}{N}\sum_{k=1}^N g(X^j_t,X^k_t)$ and a
 noise term $W^j_t$, so that the evolution of the state variable $X^j$ is given by the following
\begin{align}
dX^j_t &= \big(\mathfrak{b}(X^j_t) + \frac{1}{N}\sum_{k=1}^N g(X^j_t,X^k_t)\big)dt + dW^j_t\nonumber\\
&= \big(\mathfrak{b}(X^j_t) + \bar{g}(X^j_t,\hat{\mu}^N(X_t))\big)dt + dW^j_t,\label{eq mean field introduction}
\end{align}
where $\hat{\mu}^N(X_t) = \frac{1}{N}\sum_{k=1}^N \delta_{X^k_t}$ is the empirical measure and $\bar{g}(x,\mu) := \E^{\mu}[g(x,\cdot)]$. The above model is also often referred to as an interacting diffusion model. The noise terms $(W^j_t)$ are taken to be probabilistically independent. As long as the functions $\mathfrak{b}$ and $g$ are suitably well-behaved, the propagation of chaos result dictates that in the limit as $N\to \infty$, the law of $(X^j_t)_{t\in [0,T]}$ converges probabilistically to the law of some process $(\bar{X}_t)_{t\in [0,T]}$ which is the same for all $j$. This result was first proven by Sznitman \cite{sznitman1991topics}, and later by many others, including \cite{gartner1988mckean,renart2004mean,touboul2012noise,baladron2012mean,bossy2014clarification,inglis2014mean}. These works typically prove the convergence over fixed time intervals $[0,T]$, but sometimes the convergence is uniform in time \cite{salhi2015uniform}. 

There is a large variety of neuronal models, including those of Hodgkin-Huxley \cite{hodgkin1952quantitative}, FitzHugh-Nagumo 
 \cite{baladron2012mean} and Wilson-Cowan \cite{wilson1973mathematical,wilson1972excitatory,amari1977dynamics}. We employ the Wilson-Cowan model in this paper.  The Wilson-Cowan model was originally developed to model the interaction between excitatory and inhibitory neurons in a population \cite{wilson1972excitatory}. It is simpler than the other models: it was developed to capture some of the qualitative features of the dynamics of interacting neurons, without being too complicated. It is sufficient for our purpose because we are more interested in the effect of the synaptic connections on the behaviour of the neural network than the internal neuronal dynamics. 
 
Our model for the synaptic connections is that of an electrical synapse, which is more sophisticated than many other models of synaptic connections. They have previously been employed to model mean-field behaviour in neuronal networks in  \cite{baladron2012mean}. In electrical synapses, neurons are extremely close, so that current is able to directly flow through specialised structures called gap junctions \cite{kandel2000principles}. Gap junctions consist of precisely aligned channels in the presynaptic and postsynaptic neuron. These channels allow ions and other molecules to flow, thereby inducing an electric current. Often the current is induced by an action-potential, so that it is natural to model the current to be proportional to the difference between the activity of the presynaptic and postsynaptic neurons (since by Ohm's Law electrical current is proportional to the voltage difference) multiplied by the maximal conductance. We take the maximal conductance to be proportional to the activity of the presynaptic neuron, which we represent by a sigmoidal function. In conclusion, if $X_{pre}$ is the state variable of the pre-synaptic neuron and $X_{post}$ is the state variable of the post-synaptic neuron, then we take the total effect of the pre-synaptic neuron on the post-synaptic neuron to be proportional to 
 \begin{equation}\label{eqn synapse 1}
g(X_{post},X_{pre}) = C_g \big(X_{post} - X_{pre}\big)S(X_{pre}),
 \end{equation}
for some sigmoid $S$ and constant $C_g$ \cite{baladron2012mean}. 

Most proofs of propagation of chaos either assume that $g$ is globally Lipschitz in both of its variables, or that 
\begin{equation}
\sup_{x,y\in\R} |g(x,y)| < \infty. \label{eq sup g}
\end{equation}
The above condition unfortunately excludes the model of electrical synapses in \eqref{eqn synapse 1}. We must therefore prove the propagation of chaos result from first principles, adapting the proof of Sznitman  \cite{sznitman1991topics}. The proof works in our specific situation because, just as with \cite{touboul2012noise}, we can exploit the fact that the internal dynamics term $\mathfrak{b}$ is linear, which allows us to use the theory of Gaussian Processes to prove the existence and uniqueness of the mean-field limit. However our model of the synaptic interactions is more complicated than that of \cite{touboul2012noise} and we are forced to develop different methods. In more detail, we reduce the problem of existence and uniqueness of the mean-field limit to an existence and uniqueness problem for the ordinary differential equations governing the mean and variance. The Gaussian nature of the mean-field limit also allows us to use Borell's Inequality to bound the tails of $\bar{X}_t^j$. In the classical theory, it has been proven that the rate of convergence of the Wasserstein Distance is $O(N^{-\frac{1}{2}})$: however in our model we do not obtain a specific rate (it is probably slower than  $O(N^{-\frac{1}{2}})$). It should be noted that \cite{bossy2014clarification}, like us, also do not assume \eqref{eq sup g}.

Another feature of our work is that we consider a multipopulation model. Classically, multipopulations models are typically composed of activator and inhibitor neurons, but in our work the number of populations may be arbitrarily high. The ratio of the number of neurons in each population is kept fixed. Mean-field multipopulation models have already been studied in the original work of Wilson and Cowan \cite{wilson1972excitatory}, but also in that of \cite{faugeras2009constructive}, \cite{bressloff2010metastable}, \cite{touboul2012noise} and \cite{baladron2012mean}, amongst others. Multipopulation models can admit much richer dynamics: as an example it is well-known that activator-inhibitor models can admit limit-cycle solutions \cite{touboul2012noise}. 

One of the basic reasons to obtain mean-field equations is that the unreduced system is too complicated to study. However even the mean-field equations are frequently difficult to study, because the drift term is a function of the 
law $\bar{\mu}_t$ of the limit; so that often there is not an elegant way of characterising the solution.
 However in our specific case, the mean-field equations are Gaussian, and so their behaviour can be further reduced to the
 ODEs governing the mean and variance. The strength of our paper is that we have reduced the seemingly very complicated
 question of studying the dynamics of a large system of interacting neurons subject to noise, to the much-more 
tractable problem of studying the dynamics of a set of coupled ODEs. We note that \cite{touboul2012noise} have already 
studied a similar problem. They obtain mean-field equations for a system of interacting Wilson-Cowan neurons. 
They also reduce the mean-field equations to a set of ODEs governing the mean and variance, and study the bifurcations 
of these ODEs. Our work is different because our interaction term is that of an electrical synapse (our interaction term is not bounded).

The structure of this paper is as follows. In Section \ref{model} we outline a general multipopulation model of interacting 
Wilson-Cowan neurons, coupled by electrical synapses and subject to noise. In Section \ref{theory} we introduce the
 mean-field equations for our system and prove the propagation of chaos property, namely that the finite-size system 
converges to the mean-field limit as the number of neurons asymptotes to infinity. In Section \ref{numeric} we give an example 
of a neural network which contains two populations, and perform a numerical simulation of the mean-field equation.

\section{Mathematical Model And Assumptions}\label{model}

In this section we outline our model and our assumptions. As we have already noted in the introduction, our model is that of a multipopulation model of Wilson-Cowan neurons, connected by electrical synapses and subject to Brownian noise. We introduce and explain our model further below. However before we do this, we must introduce some notation.
 
We assume that the number of populations $P$ is fixed, and that 
the ratio of neurons in each population is fixed. We fix the integer constants $s_1,\ldots s_P > 0$ 
which give the relative proportion of neurons in each population. We organise the neurons into $(2n+1)$ groups,
 with each group containing neurons from all of the populations. In the mean-field limit as $n\to \infty$, we will 
find that the behaviour of a single group of neurons is characteristic of the behaviour of the entire ensemble. 
Within each group $i \in [-n,n]$, we specify that there are $2s_\alpha+1$ neurons from population $\alpha$. We accordingly index
 each neuron by $(i,\alpha,p)$, for $i\in [-n,n]$, $\alpha\in [1,P]$ and $p\in [-s_\alpha,s_\alpha]$. Observe that the total number of neurons across all of the groups in population $\alpha \in [1,P]$ is $(2s_\alpha+1) \times (2n+1)$. Our goal is to asymptote $n\to \infty$, while keeping the other parameters fixed. 

We work in a complete probability space $(\Omega,\mathcal{F},\mathbb{P})$, endowed with a filtration $(\mathcal{F}_t)$ and satisfying the usual conditions. The set of all probability measures on some topological space $\mathcal{X}$ (endowed with the Borelian sigma-algebra) is written as $\Pcal(\mathcal{X})$. The state variable for any single neuron is assumed to be in $\R$. Let $\bar{s}= \sum_{\alpha=1}^P (2s_\alpha + 1)$. We write $\T := \R^{\bar{s}}$ to be the state space for a single group of populations, using the indexing (for $i\in [-n,n]$): $X_t^i \in \T$, where $X_t^i := (X_t^{i,\alpha,p})_{\alpha\in [1,P],p\in [-s_\alpha,s_\alpha]}$. In general, when $Y \in \R^q$ for some $q>0$, $\norm{Y} = \sup_{r\in [1,q]}\left| Y^r \right|$. Throughout this article, we work over the time interval $[0,T]$, for some fixed $T>0$.

Let $(W^{i,\alpha,p})_{i\in [-n,n],\alpha\in [1,P],p\in [-s_\alpha,s_\alpha]}$ be mutually independent Wiener Processes on $[0,T]$. The  general evolution equation of our neural network is of the form
\begin{multline}\label{networkmodel}
 dX^{i,\alpha,p}_t = -\frac{1}{\tau}X^{i,\alpha,p}_t+I^{\alpha}(t)+\\ \frac{1}{2n+1} \sum_{j=-n}^{n}\sum_{\beta=1}^P
\sum_{q=-s_\beta}^{s_\beta} \bar{J}^{\alpha\beta}\left(X_t^{i,\alpha,p}-X_t^{j,\beta,q}\right) S(X_t^{j,\beta,q})dt + \sigma^\alpha dW^{i,\alpha,p}_t,
\end{multline}
with $X_0 = X_{ini}$ (a constant). We will often neglect the summation limits if they are clear from the context. Here $I^{\alpha}\in \mathcal{C}(\R)$ is  the deterministic input current; it is assumed to be continuous and bounded and homogeneous across populations.  $\tau$ and $\sigma^\alpha$ are positive constants: $\tau$ gives the magnitude of the decay due to the internal dynamics, and $\sigma^{\alpha}$ the relative strength of the noise. 

We have already noted in the introduction that we model the synaptic connections as electrical. It can be seen that the strength of connection from a presynaptic neuron in population $\alpha$ to a postsynaptic neuron in population $\beta$ is scaled by the constant $\bar{J}^{\alpha\beta} \in \R$. $S$ is a sigmoid function: monotonically increasing, twice differentiable everywhere and satisfying $0 < S(x) < 1$. We assume that there exists a positive constant $C_S$ such that
\begin{align*}
\sup_{x\in\R}\left| S'(x)x \right| &\leq C_S \text{ and that } \\
\sup_{x\in \R}\left|x\frac{d^2 S}{dx^2}\right|  &< \infty.
\end{align*}
Write $\sigma_{min} = \rm{sup}_{\alpha\in [1,P]} \sigma^\alpha $ (this is assumed to be strictly greater than zero), $\sigma_{max} = \rm{sup}_{\alpha \in [1,P]} \sigma^\alpha$ and $\bar{J}^{max} = \sup_{\alpha,\beta \in [1,P]}\left|\bar{J}^{\alpha\beta}\right|$.
The empirical measure is defined to be
\begin{equation}
\hat{\mu}^n(X) := \frac{1}{2n+1}\sum_{i=-n}^n \delta_{X^i} \in \mathcal{P}(\mathcal{C}([0,T], \T)).
\end{equation}
 The evolution equation (1) may thus be written as 
\begin{multline}\label{eq:membraneevolution}
dX^{i,\alpha,p}_t = \left[-\frac{1}{\tau}X^{i,\alpha,p}_t + I^\alpha(t)\right]dt + \sigma^\alpha dW^{i,\alpha,p}_t + \\ \left[\int_{\T}\sum_{\beta=1}^P
\sum_{q=-s_\beta}^{s_\beta}  \bar{J}^{\alpha\beta}\left(X_t^{i,\alpha,p}-Y_t^{\beta,q}\right) S(Y_t^{\beta,q})d\hat{\mu}^n(X)(Y)\right]dt,
\end{multline}
where the integration in the second line is performed with respect to the empirical measure. The following theorem guarantees strong existence and uniqueness to the evolution equations.
\begin{thr}\label{theorem1}
There exists a unique strong solution to (\ref{networkmodel}).
\end{thr}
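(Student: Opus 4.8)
The plan is to rewrite \eqref{networkmodel} as a finite-dimensional It\^o system $dX_t = b(t,X_t)\,dt + \Sigma\,dW_t$ on $\T^{2n+1}=\R^{(2n+1)\bar s}$, where $b=(b^{i,\alpha,p})$ collects all the drift components, $\Sigma$ is the constant diagonal matrix carrying the entries $\sigma^\alpha$, and $W$ is the vector of the independent Wiener processes $(W^{i,\alpha,p})$; then to check that $b$ is locally Lipschitz in the space variable (uniformly on $[0,T]$) and of at most linear growth, which by the classical theory of It\^o equations yields a unique strong solution on $[0,T]$. The diffusion coefficient is constant, hence globally Lipschitz and of linear growth, so all the work is in the drift. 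Local Lipschitz continuity is immediate: the maps $X\mapsto X^{i,\alpha,p}$ and $X\mapsto X^{i,\alpha,p}-X^{j,\beta,q}$ are linear, $I^\alpha$ is continuous and bounded, $S\in\mathcal{C}^2$ is locally Lipschitz, and finite sums and products of locally Lipschitz maps are locally Lipschitz on bounded sets; thus $b(t,\cdot)$ is locally Lipschitz on $\T^{2n+1}$ with a Lipschitz constant on each ball that is uniform in $t\in[0,T]$. This already yields local existence and pathwise uniqueness up to an explosion time $\zeta$.

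The main point is the linear growth bound, and it is here that the hypotheses on $S$ enter. Fix $t\in[0,T]$ and $X=(X^{i,\alpha,p})\in\T^{2n+1}$. The internal term is bounded by $\tfrac1\tau\norm{X}$ and the input by $\norm{I}_\infty:=\sup_{s\in[0,T],\,\alpha}|I^\alpha(s)|<\infty$. For the coupling term, write
\begin{equation*}
\big(X^{i,\alpha,p}-X^{j,\beta,q}\big)S(X^{j,\beta,q}) = X^{i,\alpha,p}\,S(X^{j,\beta,q}) - X^{j,\beta,q}\,S(X^{j,\beta,q}).
\end{equation*}
Since $0<S<1$, the first summand is bounded in absolute value by $\norm{X}$. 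For the second, $\tfrac{d}{dx}\big(xS(x)\big)=S(x)+xS'(x)$, so $\big|\tfrac{d}{dx}(xS(x))\big|\le 1+C_S$ for all $x$ by the assumption $\sup_x|xS'(x)|\le C_S$, whence $|xS(x)|=|xS(x)-0\cdot S(0)|\le(1+C_S)|x|\le(1+C_S)\norm{X}$. Summing over $j\in[-n,n]$, $\beta\in[1,P]$ and $q\in[-s_\beta,s_\beta]$, using $|\bar J^{\alpha\beta}|\le\bar J^{max}$ and the factor $\tfrac1{2n+1}$, the coupling term is bounded by $\bar s\,\bar J^{max}(2+C_S)\norm{X}$. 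Hence there is a constant $K=K(\tau,\norm{I}_\infty,\bar s,\bar J^{max},C_S)$, independent of $n$, with $\norm{b(t,X)}\le K(1+\norm{X})$ for all $t\in[0,T]$ and $X\in\T^{2n+1}$.

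To conclude, the linear-growth bound together with It\^o's formula applied to $\norm{X_{s\wedge\zeta}}^2$ and Gr\"onwall's lemma gives $\E\big[\sup_{s\le t\wedge\zeta}\norm{X_s}^2\big]<\infty$ for every $t$, so the explosion time satisfies $\zeta>T$ almost surely; combined with the local pathwise uniqueness above this produces the unique strong solution of \eqref{networkmodel} on $[0,T]$. (Alternatively, because the noise is additive one may set $Y^{i,\alpha,p}_t:=X^{i,\alpha,p}_t-\sigma^\alpha W^{i,\alpha,p}_t$ and solve the resulting random ODE pathwise via Picard--Lindel\"of and an a priori bound, the Brownian paths being bounded on $[0,T]$ for almost every $\omega$.) The only real obstacle is that $b$ is \emph{not} globally Lipschitz: its Jacobian contains the unbounded terms $X^{i,\alpha,p}S'(X^{j,\beta,q})$, so a direct global Picard iteration is unavailable and one must combine local existence with the non-explosion estimate; the linear-growth verification, which rests squarely on $\sup_x|xS'(x)|\le C_S$, is the crux of the argument.
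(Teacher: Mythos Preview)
Your proof is correct and follows essentially the same route as the paper, which dispatches the theorem in one line by citing \cite[Theorem 3.5]{mao2007stochastic} (existence and uniqueness for It\^o SDEs under local Lipschitz plus linear growth). You have simply made explicit the verification of those hypotheses---local Lipschitz continuity of the drift and the linear-growth bound via $0<S<1$ and $\sup_x|xS'(x)|\le C_S$---that the paper leaves to the reader by invoking the reference.
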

\begin{proof}
This follows from a direct application of \cite[Theorem 3.5]{mao2007stochastic}.
\end{proof}
\section{Mean-field description and propagation of chaos}\label{theory}

In this section we prove the propagation of chaos property. There are two key theorems. In the first theorem
 (Theorem \ref{Thm Mean Field Existence and Uniqueness}), we prove that there exists a unique solution to the mean-field 
equation given just below in \eqref{meanelec1}. In the second theorem, Theorem \ref{theorem3}, we prove that the expectation of the distance between the solution of the mean-field equation and the solution of a single group $X^i$ (for some fixed $i$)
 from \eqref{networkmodel} goes to zero as $n\to \infty$. We could not use classical propagation of chaos results 
such as those in \cite{sznitman1991topics} because the interaction due to the electrical synapses is not uniformly Lipschitz.
 Neither does our model fit in the framework of more recent results such as those in \cite{bolley2013uniform}. As we note in Remark \ref{Rem Wasserstein}, this result implies that the $1^{st}$ Wasserstein distance between the probability laws goes to zero. 

Let $(W^{\alpha,p})_{\alpha \in [1,P], p\in [-s_\alpha,s_\alpha]}$ be independent Wiener processes. Define $\bar{X}$ to be the solution of 
\begin{multline}\label{meanelec1}
d\bar{X}_t^{\alpha,p} =  \sigma^\alpha dW_t^{\alpha,p} + \\ \left[ -\frac{\bar{X}_t^{\alpha,p}}{\tau} + I^\alpha(t) +\int_{\T}\sum_{\beta=1}^P\sum_{q=-s_\beta}^{s_\beta} 
 \bar{J}^{\alpha\beta}\left(\bar{X}_t^{\alpha,p}-Y^{\beta,q}\right) S(Y^{\beta,q})
d\bar{\mu}_t(Y)\right] dt,
\end{multline}
for $\alpha\in [1,P]$ and $\bar{X}_0 := X_{ini}\in \T$. Here $\bar{\mu} \in \mathcal{P}(\mathcal{C}([0,T],\T))$ is the law of $\bar{X}$, with time marginal $\bar{\mu}_t \in \mathcal{P}(\T)$. The existence and uniqueness of the solution $\bar{X}$ is proved in the following section. We note that by symmetry, the law of $\bar{X}^{\alpha,p}$ is the same as the law of $\bar{X}^{\alpha,q}$ for any $p,q \in [-s_\alpha,s_\alpha]$.

Our first main theorem is the following.
\begin{thr}\label{Thm Mean Field Existence and Uniqueness}
There exists a unique strong solution $\bar{X}$ (with law \newline$\bar{\mu} \in \Pcal( \mathcal{C}([0,T], \T))$) to (\ref{meanelec1}) which satisfies
\begin{equation}
 \E \left[ \sup_{t\in [0,T]}  \norm{\bar{X}_t}^2\right] < \infty.\label{eqn norm bound}
\end{equation}
\end{thr}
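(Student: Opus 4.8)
The plan is to exploit two structural features of \eqref{meanelec1}: the internal dynamics $-\bar{X}_t^{\alpha,p}/\tau$ is linear, and once the law $\bar{\mu}$ is frozen the interaction term is \emph{affine} in $\bar{X}_t^{\alpha,p}$. Introducing the deterministic functions
\[
F^\alpha(t) := \sum_{\beta=1}^P\sum_{q=-s_\beta}^{s_\beta}\bar{J}^{\alpha\beta}\,\E^{\bar{\mu}_t}\!\big[S(Y^{\beta,q})\big],\qquad
H^\alpha(t) := \sum_{\beta=1}^P\sum_{q=-s_\beta}^{s_\beta}\bar{J}^{\alpha\beta}\,\E^{\bar{\mu}_t}\!\big[Y^{\beta,q}S(Y^{\beta,q})\big],
\]
equation \eqref{meanelec1} becomes, componentwise, the linear scalar SDE $d\bar{X}_t^{\alpha,p}=\big[(F^\alpha(t)-\tfrac1\tau)\bar{X}_t^{\alpha,p}+I^\alpha(t)-H^\alpha(t)\big]dt+\sigma^\alpha dW_t^{\alpha,p}$. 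For continuous $F^\alpha,H^\alpha$ this SDE has a unique strong solution which is Gaussian; moreover the distinct components are driven by independent noises with decoupled drifts and deterministic initial data, so the time marginal $\bar{\mu}_t$ is a product of one-dimensional Gaussians, $\bar{X}_t^{\alpha,p}\sim N(m_t^{\alpha,p},v_t^{\alpha,p})$. Hence $\E^{\bar{\mu}_t}[S(Y^{\beta,q})]=\Phi(m_t^{\beta,q},v_t^{\beta,q})$ and $\E^{\bar{\mu}_t}[Y^{\beta,q}S(Y^{\beta,q})]=\Psi(m_t^{\beta,q},v_t^{\beta,q})$, where $\Phi(m,v):=\E[S(\xi)]$, $\Psi(m,v):=\E[\xi S(\xi)]$ with $\xi\sim N(m,v)$, and the McKean--Vlasov problem collapses to the finite ODE system
\[
\dot{m}_t^{\alpha,p}=\big(F^\alpha(t)-\tfrac1\tau\big)m_t^{\alpha,p}+I^\alpha(t)-H^\alpha(t),\qquad
\dot{v}_t^{\alpha,p}=2\big(F^\alpha(t)-\tfrac1\tau\big)v_t^{\alpha,p}+(\sigma^\alpha)^2,
\]
with $m_0^{\alpha,p}=X_{ini}^{\alpha,p}$, $v_0^{\alpha,p}=0$, and $F^\alpha,H^\alpha$ now read as the above functions of $(m_t,v_t)$.

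I would make this reduction rigorous in both directions. For \emph{existence}: solve the ODE system on $[0,T]$ (see below), then with the resulting continuous $F^\alpha,H^\alpha$ construct the unique strong Gaussian solution $\bar{X}$ of the linear SDE; by construction its marginal means and variances solve the same ODEs with the same initial data, hence coincide with $(m,v)$, so the frozen law is self-consistent and $\bar{X}$ solves \eqref{meanelec1}. The bound \eqref{eqn norm bound} then follows from standard $L^2$ estimates for linear SDEs with bounded continuous coefficients (variation of constants together with Doob's inequality and the It\^o isometry; alternatively Borell's inequality, as anticipated in the introduction, gives Gaussian tails for $\sup_{t\le T}\bar{X}_t^{\alpha,p}$). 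For \emph{uniqueness}: any solution of \eqref{meanelec1} whose time marginals have first moments bounded uniformly on $[0,T]$ has continuous coefficients $F^\alpha,H^\alpha$ (since $S$ is bounded and continuous and the first moments are locally uniformly integrable), hence is itself a solution of the above linear SDE, hence Gaussian, hence determined by its mean and variance, which must solve the ODE system and therefore agree with $(m,v)$.

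It remains to establish existence and uniqueness for the ODE system, which I would obtain by Cauchy--Lipschitz. The analytic heart is that $(m,v)\mapsto\Phi(m,v)$ and $(m,v)\mapsto\Psi(m,v)$ are Lipschitz on $\R\times[0,\infty)$: differentiating under the expectation for $v>0$ and using Gaussian integration by parts, $\E[Z\,g(m+\sqrt{v}Z)]=\sqrt{v}\,\E[g'(m+\sqrt{v}Z)]$, one obtains $\partial_m\Psi=\E[S(\xi)+\xi S'(\xi)]$, $\partial_v\Psi=\tfrac12\E[2S'(\xi)+\xi S''(\xi)]$, and analogously $\partial_m\Phi=\E[S'(\xi)]$, $\partial_v\Phi=\tfrac12\E[S''(\xi)]$; the hypotheses $\sup_x|S'(x)x|\le C_S$ and $\sup_x|xS''(x)|<\infty$, together with the continuity of $S',S''$ near the origin (which then bounds $S'$ and $S''$ themselves), make all these partial derivatives bounded, uniformly up to the degenerate boundary $v=0$. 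This yields a unique local solution; to globalize it on $[0,T]$ I would note that $0<S<1$ gives $|F^\alpha(t)|\le\bar{s}\,\bar{J}^{max}$, so $F^\alpha(t)-1/\tau\le K$ for a constant $K$, whence Gr\"onwall applied to the $v$-equation bounds $\sup_{t\le T}v_t^{\alpha,p}$, and the at-most-linear growth of $\Psi$ in $m$ plus Gr\"onwall applied to the $m$-equation bounds $\sup_{t\le T}\norm{m_t}$; there is no finite-time blow-up, so the solution is global. The main obstacle I anticipate is exactly this parameter regularity of $\Phi$ and $\Psi$ near $v=0$ --- controlling $\E[S(\xi)]$, $\E[\xi S(\xi)]$ and their derivatives in $(m,v)$ uniformly as the Gaussian degenerates to a point mass --- since everything else is the now-standard reduction of a Gaussian McKean--Vlasov equation to its moment ODEs, plus routine Gr\"onwall and linear-SDE estimates.
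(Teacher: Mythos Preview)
Your proposal is correct and follows essentially the same architecture as the paper: reduce the McKean--Vlasov SDE to the linear Gaussian SDE \eqref{meanelec2}, establish the equivalence with the ODE system \eqref{eqn mbar}--\eqref{eqn Pbar} for the mean and variance (the paper's Lemma~\ref{lemma equivalence}), prove Lipschitz regularity and linear growth of the right-hand side (Lemmas~\ref{lem f F globally Lipschitz} and~\ref{Lem Lipschitz Growth}), and invoke Borell's inequality for \eqref{eqn norm bound}. The only tactical differences are that the paper obtains the Lipschitz estimate for $\Phi,\Psi$ via an It\^o-formula interpolation trick rather than your direct differentiation under the Gaussian expectation, and globalizes the ODE solution by a Cauchy--Peano iteration with uniform step size rather than your a~priori Gr\"onwall bounds; both pairs of arguments are equivalent in strength here.
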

The propagation of chaos (our second main result) is the following.
\begin{thr}\label{theorem3}
\[
\lim_{n\to\infty}\mathbb{E}\left[\sup_{t\in [0,T]}\norm{X_t^1-\bar{X}^1_t}\right] = 0.
\]
\end{thr}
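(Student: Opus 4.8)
The plan is to adapt Sznitman's coupling argument to our non-Lipschitz interaction, invoking the Gaussian structure of the mean-field limit at exactly one point: to keep a certain \emph{random} Gronwall coefficient bounded uniformly in $n$.

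\emph{Coupling and the role of Gaussianity.} On the probability space carrying the Wiener processes $(W^{i,\alpha,p})$ of \eqref{networkmodel}, let $\bar X^i=(\bar X^{i,\alpha,p})_{\alpha,p}$ be, for each $i\in[-n,n]$, the solution of the mean-field equation \eqref{meanelec1} driven by $(W^{i,\alpha,p})_{\alpha,p}$. Since the coefficients of \eqref{meanelec1} depend only on the common law $\bar\mu$, the $\bar X^i$ are i.i.d.\ with law $\bar\mu$ and the array $(X^i,\bar X^i)_{-n\le i\le n}$ is exchangeable. Moreover, integrating against $\bar\mu_s$ in \eqref{meanelec1} separates the held-fixed value $\bar X^{i,\alpha,p}_s$ from the integration variable, so the drift of $\bar X^{i,\alpha,p}$ is affine in $\bar X^{i,\alpha,p}_s$ with bounded deterministic time-dependent coefficients; hence each $\bar X^i$ is a Gaussian process. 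By Theorem \ref{Thm Mean Field Existence and Uniqueness} and Borell's inequality applied to the supremum of $\bar X^i$ over the compact index set $[0,T]\times\{(\alpha,p)\}$, the variable $Q_i:=\sup_{t\in[0,T]}\norm{\bar X^i_t}$ satisfies $\E[e^{cQ_i}]<\infty$ for every $c>0$ (in particular it has all polynomial moments). Write $D^i_s:=\sup_{u\le s}\norm{X^i_u-\bar X^i_u}$ and $E_s:=\tfrac1{2n+1}\sum_{j=-n}^n D^j_s$.

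\emph{Estimating the drift difference.} Subtracting \eqref{networkmodel} from \eqref{meanelec1} cancels the noise, leaving component-wise
\[
X^{i,\alpha,p}_t-\bar X^{i,\alpha,p}_t=\int_0^t\Big(-\tfrac1\tau\big(X^{i,\alpha,p}_s-\bar X^{i,\alpha,p}_s\big)+A^{(1),i,\alpha,p}_s+A^{(2),i,\alpha,p}_s\Big)\,ds,
\]
where $A^{(1)}$ compares the empirical interaction evaluated along $X$ with the one along $\bar X$, and $A^{(2)}$ compares the empirical interaction along $\bar X$ with the integral against $\bar\mu_s$. For $A^{(1)}$ I use the identity
\[
(x-y)S(y)-(\bar x-\bar y)S(\bar y)=(x-\bar x)S(y)+\bar x\big(S(y)-S(\bar y)\big)-\big(yS(y)-\bar yS(\bar y)\big)
\]
together with $0<S<1$, the bound $\norm{S'}_\infty<\infty$ (which follows from $\sup_x|xS'(x)|\le C_S$ and the continuity of $S'$), and $|yS(y)-\bar yS(\bar y)|\le(1+C_S)|y-\bar y|$, to obtain $|A^{(1),i,\alpha,p}_s|\le c_1\big(D^i_s+(Q_i+1)E_s\big)$ for a constant $c_1$ independent of $n$; the essential feature of this decomposition is that the only unbounded factor, $Q_i$, multiplies the error $E_s$ rather than a finite-system value. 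For $A^{(2)}$, the empirical interaction factors as $\bar X^{i,\alpha,p}_s\,U^\alpha_s-V^\alpha_s$, where $U^\alpha_s$ is an empirical mean of the bounded functionals $S(\bar X^{j,\beta,q}_\cdot)$ and $V^\alpha_s$ an empirical mean of $h(\bar X^{j,\beta,q}_\cdot)$ with $h(y)=yS(y)$, the latter lying in $L^p(\mathcal{C}([0,T]))$ for every $p$ because $|h(y)|\le(1+C_S)|y|$ and $Q_j$ has all moments; by the law of large numbers in $\mathcal{C}([0,T])$ both $U^\alpha$ and $V^\alpha$ converge to their means uniformly on $[0,T]$, and after a Cauchy--Schwarz split of the $\bar X^{i,\alpha,p}_s U^\alpha_s$ term one gets $\E\big[\sup_{s\le T}\max_{\alpha,p}|A^{(2),i,\alpha,p}_s|^2\big]\to0$. (No a priori moment bound on the finite system $X^i$ is needed, since only the differences $X^i-\bar X^i$ enter $A^{(1)}$ and only the Gaussian processes $\bar X^i$ enter $A^{(2)}$.)

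\emph{A self-averaging Gronwall argument.} Inserting these bounds, taking $\sup_{u\le t}$ and maximising over $(\alpha,p)$ yields, pathwise,
\[
D^i_t\le c_2\int_0^t D^i_s\,ds+c_2(Q_i+1)\int_0^t E_s\,ds+\mathcal{A}^{(2)}_i(t),\qquad \mathcal{A}^{(2)}_i(t):=\int_0^t\max_{\alpha,p}\sup_{u\le s}\big|A^{(2),i,\alpha,p}_u\big|\,ds,
\]
and averaging this over $i\in[-n,n]$ closes it into an inequality for $E_t$ alone:
\[
E_t\le(c_3+c_2\bar Q)\int_0^t E_s\,ds+\bar{\mathcal{A}}^{(2)}(t),\qquad \bar Q:=\tfrac1{2n+1}\sum_i Q_i,\quad \bar{\mathcal{A}}^{(2)}:=\tfrac1{2n+1}\sum_i\mathcal{A}^{(2)}_i.
\]
By Gronwall's lemma $E_T\le\bar{\mathcal{A}}^{(2)}(T)\,e^{(c_3+c_2\bar Q)T}$, so by Cauchy--Schwarz
\[
\E[E_T]\le\Big(\E\big[\bar{\mathcal{A}}^{(2)}(T)^2\big]\Big)^{1/2}\Big(\E\big[e^{2(c_3+c_2\bar Q)T}\big]\Big)^{1/2}.
\]
The first factor tends to $0$ by the $A^{(2)}$ estimate together with exchangeability, and the second stays bounded in $n$ since $\E\big[e^{2c_2T\bar Q}\big]=\big(\E[e^{2c_2TQ_1/(2n+1)}]\big)^{2n+1}\to e^{2c_2T\,\E[Q_1]}<\infty$. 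Because $\E[E_T]=\E[D^1_T]=\E\big[\sup_{t\in[0,T]}\norm{X^1_t-\bar X^1_t}\big]$ by exchangeability, this proves the claim. The main obstacle, and the only place where the Gaussianity is indispensable, is precisely this last display: the absence of a uniform Lipschitz bound forces the Gronwall constant to carry the random quantity $c_2\bar Q$, and only the finite exponential moment of $Q_1$ furnished by Borell's inequality keeps $\E[e^{2c_2T\bar Q}]$ under control as $n\to\infty$.
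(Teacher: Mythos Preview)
Your proof is correct and takes a genuinely different route from the paper's. Both arguments hinge on Borell's inequality for the Gaussian process $\bar X$, but deploy it differently. The paper takes expectations before Gronwall and handles the non-Lipschitz factor $|\bar X^{i,\alpha,p}|$ by a truncation at level $M$: on $\{\norm{\bar X^0}_T^*<M\}$ the Gronwall coefficient is deterministic of order $M$, while the complementary event contributes $\kappa_M=\E[\norm{\bar X^0}_T^*\,\mathbf 1_{\norm{\bar X^0}_T^*\ge M}]\le e^{-\delta M^2}$; the sub-Gaussian decay then beats the $e^{cM}$ coming from Gronwall, and one lets $M,n\to\infty$ jointly. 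You instead keep the Gronwall inequality pathwise with the \emph{random} coefficient $c_2\bar Q$, average over $i$ so that the coefficient self-averages to $\bar Q=\frac{1}{2n+1}\sum_i Q_i$, and then control $\E[e^{2c_2T\bar Q}]$ via the finite exponential moments of $Q_1$ (which are also a consequence of Borell) together with independence; in fact H\"older with exponent $2n+1$ gives the uniform bound $\E[e^{2c_2T\bar Q}]\le \E[e^{2c_2TQ_1}]$, which is slightly stronger than the limit you write. Your argument is cleaner in that it avoids the coupled double limit in $M$ and $n$, and it would go through verbatim for any mean-field limit whose supremum has exponential moments of all orders, not just a Gaussian one.

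Two small remarks. The paper treats the analogue of your $A^{(2)}$ term by a fixed-$t$ second-moment computation (the summands are independent and mean-zero over $j$, yielding an explicit $(2n+1)^{-1/2}$), whereas you invoke the law of large numbers in $\mathcal C([0,T])$ and upgrade almost-sure convergence to $L^2$; this is legitimate since $S$ is bounded (dominated convergence for $U^\alpha$) and $|yS(y)|\le |y|$ with $\sup_{[0,T]}|\bar X^{j,\beta,q}|$ having all moments (uniform integrability for $V^\alpha$), but it is a step you should make explicit. Finally, your claim that $\norm{S'}_\infty<\infty$ is correct under the paper's hypotheses (continuity of $S'$ on compacts plus $|S'(x)|\le C_S/|x|$ at infinity), and is the key to making $Q_i$, rather than a finite-system quantity, the only unbounded factor in the $A^{(1)}$ bound.
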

\begin{rem}\label{Rem Wasserstein}
This result implies that
\begin{equation}
\lim_{n\to\infty}d^W(\mu^1,\bar{\mu})= 0,
\end{equation}
where $\mu^1 \in \mathcal{P}\big(\mathcal{C}([0,T],\mathcal{T})\big)$ is the probability law of \newline $X^1 := (X^{1,\alpha,p})_{\alpha\in[1,P],p\in [-s_\alpha,s_\alpha]}$ in \eqref{networkmodel}. Here $d^W$ is the $1^{st}$ Wasserstein Distance, a metric on $ \mathcal{P}\big(\mathcal{C}([0,T],\mathcal{T})\big)$ given by
\begin{equation}
d^W\big(\nu,\gamma) = \inf_{\zeta\in\Gamma(\nu,\gamma)}\mathbb{E}^{\zeta}\left[ \sup_{t\in [0,T]}\norm{x(t)-y(t)}\right],
\end{equation}
where $\Gamma(\nu,\gamma)$ is the set of all probability measures on $\mathcal{C}([0,T],\mathcal{T}) \times \mathcal{C}([0,T],\mathcal{T})$ with marginals $\nu$ and $\gamma$ which are (respectively) the laws of the processes $x(t)$ and $y(t)$.
\end{rem}

\begin{rem}
We have assumed for notational convenience throughout this paper that the initial condition is a constant. In fact this paper easily generalises to the situation where the initial condition of each $X^i$ is Gaussian, as long as $X^i_0$ is independent of $X^j_0$ for $i \neq j$.
\end{rem}
\subsection{Existence and Uniqueness of Solution to the Mean Field Equations}
Classically, \cite{sznitman1991topics} showed existence and uniqueness to the mean-field equations by using a fixed-point argument. It is not immediately clear how one might adapt this method to our context because the interaction terms are not uniformly Lipschitz. Instead, we proceed by noting that any solution of the mean-field equations must be Gaussian. We may then reduce the problem of existence and uniqueness of a solution to \eqref{meanelec1} to the problem of existence and uniqueness of a solution to the ordinary differential equation governing the mean and variance of the Gaussian solution.

\begin{proof}[Proof of Theorem \ref{Thm Mean Field Existence and Uniqueness}]
The existence and uniqueness is a direct consequence of Lemmas \ref{lemma equivalence} and \ref{Lem Existence} (below). 
 Equation \eqref{eqn norm bound} follows from the fact that the solution $(\bar{X}_t)_{t\in [0,T]}$ is continuous and Gaussian 
with bounded mean
 and variance and  Borell's Inequality (see \cite[Theorem 2.1]{adler1990introduction}).
\end{proof}

We start by proving Lemma \ref{lemma equivalence}, i.e. that the existence and uniqueness to the SDE reduces to the question of existence and uniqueness to the ODEs governing the mean and variance.

We must first introduce some new notation. Using the fact that $\big(\bar{X}^{\alpha,p}\big)_{p\in [-s_\alpha,s_\alpha]}$ are identically distributed, we may write (\ref{meanelec1}) as 
\begin{equation}\label{meanelec2}
d\bar{X}^{\alpha,p}_t=\left(F^{\alpha}(\bar{\mu}_t) \bar{X}_t^{\alpha,p}+f^{\alpha}(t,\bar{\mu}_t)\right)dt + \sigma^\alpha dW_t^{\alpha,p}
\end{equation}
where for some $\gamma\in \mathcal{P}(\T)$,
$$F^{\alpha}(\gamma):=-\frac{1}{\tau}+ \sum_{\beta=1}^p(2s_\beta+1)\int_{\T}\bar{J}^{\alpha\beta}
S\big(y^{\beta,0}\big)d\gamma(y)$$
and
$$f^{\alpha}(t,\gamma):=I^\alpha(t)- \sum_{\beta=1}^P(2s_\beta+1)\int_{\T}\bar{J}^{\alpha\beta}
y_t^{\beta,0}S(y^{\beta,0})d\gamma(y).$$

Let $\bar{X}_t := (\bar{X}^{\alpha,p}_t)_{\alpha\in [1,P],p\in [-s_\alpha,s_\alpha]}$.

Let $(\xi_t)_{t\in [0,T]} \subset \mathcal{P}(\T)$ such that $t \mapsto F(\xi_t)$ and $t \mapsto f(t,\xi_t)$ are continuous in $t$. In particular, if there exists $\xi \in \mathcal{P}(\mathcal{C}([0,T],\T))$ with marginals $(\xi_t)_{t\in [0,T]}$ such that $\E^{\xi(y)}\left[\sup_{t\in [0,T]} \norm{y_t}^2\right] < \infty$, then this property would be satisfied. To see this, let $t_{(m)}\to t$ as $m\to \infty$. Then since $y_{t_{(m)}} \to y_t$ for any $y \in \mathcal{C}([0,T],\T)$, by the dominated convergence theorem for any $\alpha,\beta \in [1,P]$ and $p \in [-s_\alpha,s_\alpha]$ and $q\in [-s_\beta,s_\beta]$,
$$
\E^{\xi(y)}\left[ y_{t_{(m)}}^{\alpha,p}y_{t_{(m)}}^{\beta,q} - y_t^{\alpha,p}y_t^{\beta,q}\right] \to 0,\text{ and } \E^{\xi(y)}\left[ y^{\alpha,p}_{t_{(m)}} - y^{\alpha,p}_t\right] \to 0, 
$$
as $m\to \infty$. In any case, the solution of the following stochastic differential equation is Gaussian (see for instance \cite[Section 5.6]{karatzas2012brownian}),

\begin{equation}\label{eq:Gaussian21}
dY^{\alpha,p}_t=\left[F^{\alpha}(\xi_t) Y_t^{\alpha,p}+f^{\alpha}(t,\xi_t)\right]dt + \sigma^\alpha dW_t^{\alpha,p}.
\end{equation}

It is clear that the drift of (\ref{eq:Gaussian21}) is linear in $Y_t$, which means that there exists a unique Gaussian Solution \cite[Section 5.6]{karatzas2012brownian}). To see this, notice that we can write the solution as
\[
Y^{\alpha,p}_t = \phi_\alpha(t)\bigg(X_{ini}^{\alpha,p} + \int_0^t \phi_\alpha^{-1}(s) f^\alpha(s,\xi_s)ds + \int_0^t \phi_\alpha^{-1}(s)\sigma^\alpha dW_s^{\alpha,p}\bigg),
\]
where $\phi_\alpha$ satisfies $\frac{d\phi_\alpha(t)}{dt} = F^{\alpha}(\xi_t)\phi_\alpha(t)$ and $\phi_\alpha(0) = 1$. One may see that the marginals of the process $\int_0^t \phi_\alpha^{-1}(s)\sigma^\alpha dW_s^{\alpha,p}$ are Gaussian by approximating the integrand using simple functions and then taking limits (see \cite[Section 5.6]{karatzas2012brownian}). This means that the marginals of $(Y^{\alpha,p}_t)_{t\in [0,T]}$ are also Gaussian. 

As a particular application of the above theory, any solution of \eqref{meanelec2} must be Gaussian (since it comes from substituting $\xi_t = \bar{\mu}_t$,  the law of the mean field process $\in \mathcal{P}(\mathcal{C}([0,T],\T))$, into \eqref{eq:Gaussian21}) and can be characterized by the evolution of its mean and covariance.

 Assuming for the moment that there is a unique solution, let $m(t) := (m^{\alpha}(t))_{\alpha \in [1,P]}$ be the mean,
 where for any $p\in [-s_\alpha,s_\alpha]$, $m^{\alpha}(t) = \E[\bar{X}_t^{\alpha,p}]$ and $Q(t) := (Q_{pq}^{\alpha\beta}(t))$
 the covariance, where for any $p\in [-s_\alpha,s_\alpha]$ and $q\in [-s_\beta,s_\beta]$, $Q_{pq}^{\alpha\beta}(t) = \E\big[(\bar{X}^{\alpha,p}_t - m^{\alpha}(t))(\bar{X}^{\beta,q}_t -
 m^{\beta}(t))\big]$. 
 
It follows from \eqref{eq:Gaussian21} that the mean and variance must satisfy the differential system
 (see for instance \cite[Section 5.6]{karatzas2012brownian}). For all $\alpha,\beta\in [1,P]$, $p\in [-s_\alpha,s_\alpha]$ and $q\in [-s_\beta,s_\beta]$
\begin{align}
\frac{d}{dt}m^{\alpha}(t) =& F^\alpha(\bar{\mu}_t)m^{\alpha}(t) + f^\alpha(t,\bar{\mu}_t) \label{eqn m evolution} \\
\frac{d}{dt}Q_{pq}^{\alpha\beta}(t) =& \big(F^\alpha(\bar{\mu}_t) + F^{\beta}(\bar{\mu}_t)\big)Q_{pq}^{\alpha\beta}(t) + \delta(p,q)\delta(\alpha,\beta)\big(\sigma^\alpha\big)^2
\end{align}
with $m(0) = X_{ini}$ and $Q_{pq}^{\alpha\beta}(0) = 0$. 
It may be seen that $Q^{\alpha\beta}_{pq}(t) = 0$ if $\alpha \neq \beta$ or $p\neq q$, and that
 $Q^{\alpha\alpha}_{pp}(t) = Q^{\alpha\alpha}_{qq}(t)$ for all $p,q\in [-s_\alpha,s_\alpha]$. Thus we only need to determine $\big(Q^{\alpha\alpha}_{00}(t)\big)_{\alpha\in [1,P]}$.

For $w > 0$ and $r\in \R$, let $\rho(r,w,y) = (2\pi w)^{-\frac{1}{2}}\exp\left( - \frac{(y-r)^2}{2w}\right)$ be the Gaussian density. It may be seen that if $Q_{00}^{\beta\beta}(t) > 0$ for all $\beta \in [1,P]$, then
\begin{multline}\label{eqn F mut}
F^{\alpha}(\bar{\mu}_t)=-\frac{1}{\tau}+ \sum_{\beta=1}^P(2s_\beta + 1)\bar{J}^{\alpha\beta}\int_{\R}S(y)\rho(m^{\beta}(t),Q_{00}^{\beta\beta}(t),y)dy := \bar{F}^{\alpha}(m(t),Q(t))
\end{multline}
and
\begin{multline}\label{eqn f mut}
f^{\alpha}(t,\bar{\mu}_t)=I^\alpha(t)- \sum_{\beta=1}^P(2s_\beta + 1)\bar{J}^{\alpha\beta}
\int_{\R}y S(y)\rho(m^{\beta}(t),Q_{00}^{\beta\beta}(t),y)dy 
:= \bar{f}^{\alpha}(t,m(t),Q(t)).
\end{multline}
If $Q_{00}^{\beta\beta}(t) = 0$ in the above expressions for (possibly several) $\beta \in [1,P]$, then we replace the integrals in the above summations by  
\begin{align*}
\int_{\R}S(y)\rho(m^{\beta}(t),Q_{00}^{\beta\beta}(t),y)dy &\to S\big(m^{\beta}(t)\big)\\
\int_{\R}y S(y)\rho(m^{\beta}(t),Q_{00}^{\beta\beta}(t),y)dy &\to m^{\beta}(t)S\big(m^{\beta}(t)\big).
\end{align*}
Thus we may consider $f$ and $F$ to be functions $\bar{f},\bar{F}$ of $m(t)$ and $Q(t)$, which are well-defined and continuous for $Q_{00}^{\beta\beta}(t) \geq 0$ and $m^{\beta}(t)\in\R$.
\begin{lem}\label{lemma equivalence}
There exists a unique solution $\bar{\mu}$ of the SDE \eqref{meanelec2} if and only if there exists a unique solution (for all time) of the following system of ordinary differential equations
\begin{align}
\frac{d}{dt}m^{\alpha}(t) =& \bar{F}^\alpha(m(t),Q(t))m^\alpha(t) + \bar{f}^\alpha(t,m(t),Q(t)) \label{eqn mbar} \\
\frac{d}{dt}Q_{00}^{\alpha\alpha}(t) =& 2\bar{F}^\alpha(m(t),Q(t))Q_{00}^{\alpha\alpha}(t) + (\sigma^\alpha)^2, \label{eqn Pbar}
\end{align}
for all $\alpha \in [1,P]$ with $m(0) = \bar{X}_{ini}$ and $Q^{\alpha\alpha}_{00}(0) = 0$.
\end{lem}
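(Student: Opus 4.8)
The plan is to prove the two implications of the equivalence separately, passing through the Gaussian structure that has already been established. Recall that we have shown above that any solution $\bar\mu$ of \eqref{meanelec2} is necessarily Gaussian with continuous marginals, and conversely that feeding any continuous family $(\xi_t)$ into \eqref{eq:Gaussian21} produces a unique Gaussian process. So the content of the lemma is really that the self-consistency condition $\xi_t = \bar\mu_t$ for the SDE is equivalent to the self-consistency condition contained in the ODE system \eqref{eqn mbar}--\eqref{eqn Pbar}. The bridge between the two is the observation, recorded in \eqref{eqn F mut}--\eqref{eqn f mut}, that for a Gaussian law $\bar\mu_t$ with mean $m(t)$ and (block-diagonal) covariance determined by $Q^{\alpha\alpha}_{00}(t)$ one has $F^\alpha(\bar\mu_t) = \bar F^\alpha(m(t),Q(t))$ and $f^\alpha(t,\bar\mu_t) = \bar f^\alpha(t,m(t),Q(t))$, with the degenerate cases $Q^{\alpha\alpha}_{00}(t)=0$ handled by the stated limiting convention.

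First I would prove the forward direction. Suppose $\bar\mu$ is the unique solution of \eqref{meanelec2}. Then $\bar\mu$ is Gaussian, so it is entirely determined by its mean $m(t)$ and covariance $Q(t)$; moreover, as argued in the text, $Q$ is block-diagonal with $Q^{\alpha\beta}_{pq}(t) = \delta(\alpha,\beta)\delta(p,q) Q^{\alpha\alpha}_{00}(t)$. Substituting the identities \eqref{eqn F mut}--\eqref{eqn f mut} into the moment equations \eqref{eqn m evolution} and the $Q$-evolution equation shows that $(m,Q^{\alpha\alpha}_{00})$ solves \eqref{eqn mbar}--\eqref{eqn Pbar} with the prescribed initial data. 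For uniqueness of the ODE solution: if $(\tilde m,\tilde Q)$ were another global solution, I would run the construction backwards — build the Gaussian process $Y$ via the explicit variation-of-constants formula displayed above with $\xi_t$ taken to be the Gaussian law $\mathcal{N}(\tilde m(t),\tilde Q(t))$ (which has finite second moment uniformly on $[0,T]$ since $\tilde m,\tilde Q$ are continuous on the compact interval), verify that its mean and covariance are exactly $(\tilde m,\tilde Q)$, hence that $\xi_t$ is its own time-marginal law, so $Y$ solves \eqref{meanelec2}; by the assumed uniqueness for \eqref{meanelec2} we get $Y \stackrel{d}{=} \bar X$, whence $(\tilde m,\tilde Q) = (m,Q^{\alpha\alpha}_{00})$. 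The converse direction is essentially the same construction read in the other order: given the unique global ODE solution $(m,Q)$, form $\xi_t = \mathcal{N}(m(t),Q(t))$, solve \eqref{eq:Gaussian21} to obtain a Gaussian process whose marginals are $\xi_t$ by \eqref{eqn F mut}--\eqref{eqn f mut} and the moment ODEs, conclude it solves \eqref{meanelec2}, and get uniqueness of the SDE solution from uniqueness of the ODE solution by the forward direction applied to any putative second SDE solution.

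A few technical points must be handled carefully. One is continuity in $t$ of $F^\alpha(\bar\mu_t)$ and $f^\alpha(t,\bar\mu_t)$, needed to invoke the linear-SDE theory of \cite[Section 5.6]{karatzas2012brownian}; this is exactly the dominated-convergence argument already sketched in the excerpt, and it applies to the candidate law $\mathcal{N}(m(t),Q(t))$ because $m,Q$ are continuous on $[0,T]$, giving a uniform second-moment bound. A second point is the degenerate behaviour at $t=0$, where $Q^{\alpha\alpha}_{00}(0)=0$: here I use the stated convention that the Gaussian integral against $\rho(m^\beta,0,\cdot)$ collapses to evaluation at $m^\beta$, and it should be checked that with this convention $\bar F^\alpha,\bar f^\alpha$ are genuinely continuous on $\{Q^{\alpha\alpha}_{00}\ge 0\}$ — this follows from weak convergence of $\mathcal N(m,w)\to\delta_m$ as $w\downarrow 0$ together with boundedness of $S$ and the growth control on $yS(y)$. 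I expect the main obstacle to be this matching at degenerate covariance: one has to make sure that the ODE right-hand side one writes down really does agree, as a function on the closed half-space, with the conditional expectations appearing in \eqref{meanelec2}, so that a solution of one system maps to a solution of the other without losing regularity at $t=0$ (and, more generally, that $Q^{\alpha\alpha}_{00}(t)$ stays nonnegative along the flow, which is immediate from \eqref{eqn Pbar} since the right-hand side equals $(\sigma^\alpha)^2>0$ whenever $Q^{\alpha\alpha}_{00}=0$). Everything else is bookkeeping with the Gaussian moment equations.
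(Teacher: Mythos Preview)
Your proposal is correct and follows the same approach as the paper's proof. Both directions hinge on the same construction: given an ODE solution $(m,Q)$, set $\xi_t=\mathcal N(m(t),Q(t))$, solve the linear SDE \eqref{eq:Gaussian21} with these frozen coefficients, and observe that the resulting Gaussian process has mean and covariance $(m,Q)$ again (because the moment equations for the linear SDE, with coefficients $\bar F^\alpha(m(t),Q(t))$ and $\bar f^\alpha(t,m(t),Q(t))$ now treated as given functions of $t$, are satisfied by $(m,Q)$ with the right initial data), so that $\xi_t$ is self-consistent and \eqref{meanelec2} holds. The paper records necessity in one sentence (``already seen'') and carries out exactly this sufficiency construction, including the nonnegativity remark $Q^{\alpha\alpha}_{00}=0\Rightarrow \frac{d}{dt}Q^{\alpha\alpha}_{00}>0$.

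The only substantive difference is that you spell out the uniqueness transfer explicitly in both directions (building a second SDE solution from a hypothetical second ODE solution, and vice versa), whereas the paper leaves this implicit in the bijection between SDE laws and their $(m,Q)$ data. Your treatment is more complete on that point, but the underlying argument is identical.
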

\begin{proof}
We have already seen the necessity of \eqref{eqn mbar}-\eqref{eqn Pbar}. For the sufficiency, suppose that we have a solution $(m,Q)$ to \eqref{eqn mbar}-\eqref{eqn Pbar}. For each $t\geq 0$, we then define $\xi_t \in \mathcal{P}\big(\T\big)$ to be a Gaussian distribution with mean $m(t)$ and variance $Q(t)$. More precisely, we define $\mathbb{E}^{\xi_t(y)}[y^{\alpha,p}] := m^\alpha(t)$, for all $\alpha\in [1,P]$ and $p\in [-s_\alpha,s_\alpha]$, $\mathbb{E}^{\xi_t(y)}\big[(y^{\alpha,p} - m^\alpha(t))^2\big] := Q_{00}^{\alpha\alpha}(t)$ and $\mathbb{E}^{\xi_t(y)}\big[(y^{\alpha,p}-m^\alpha(t))(y^{\beta,q}-m^\beta(t))\big] := 0$ if $\alpha\neq\beta$ or $p\neq q$. We note that  this is a well-defined covariance function which is positive definite, because $Q_{00}^{\alpha\alpha}(t) \geq 0$ for all $t\in [0,T]$ (since if $Q_{00}^{\alpha\alpha}(t) = 0$ then $\frac{d}{dt}Q_{00}^{\alpha\alpha}(t) > 0$). 

We substitute this definition of $\xi_t$ into  \eqref{eq:Gaussian21}. Using this definition of $\xi_t$, \eqref{eq:Gaussian21} becomes a linear SDE with coefficients which are continuous in time, for which there is a unique strong solution, as noted in \cite[Section 5.6]{karatzas2012brownian}. This solution is Gaussian, and we write its law as $\bar{\mu} \in \mathcal{P}(\mathcal{C}([0,T],\T))$. The marginal $\bar{\mu}_t$ has mean $m(t)$ and variance $Q(t)$. This means that the strong solution must also satisfy \eqref{meanelec2}.
\end{proof}

Let $\mathcal{R} = (m^{\alpha},Q_{00}^{\alpha\alpha})_{\alpha \in [1,P]}$ be the solution space of \eqref{eqn mbar}-\eqref{eqn Pbar}, with $Q_{00}^{\alpha\alpha}\geq 0$. We have left out of the definition of $\mathcal{R}$ the covariances $Q^{\alpha\beta}_{pq}$ if $\alpha\neq\beta$ or $p\neq q$ because they are trivially zero. We use the supremum norms $\norm{m} = \sup_{\alpha\in [1,P]}|m^{\alpha}|$ and $$\norm{Q}=\sup_{\alpha\in [1,P]} |Q_{00}^{\alpha\alpha}|.$$

\begin{lem}\label{Lem Existence}
There exists a unique solution $(m,Q)\in \mathcal{C}([0,T],\mathcal{R})$ to \eqref{eqn mbar} and \eqref{eqn Pbar}.
\end{lem}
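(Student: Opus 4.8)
The plan is to read \eqref{eqn mbar}--\eqref{eqn Pbar} as a system of ODEs on the closed state space $\mathcal{R}=\R^P\times[0,\infty)^P$, apply the Cauchy--Lipschitz (Picard--Lindel\"of) theorem to produce a unique maximal solution, and then use a Gronwall a priori bound to rule out finite-time blow-up, so that the solution is in fact defined on all of $[0,T]$ (indeed on $[0,\infty)$).

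First I would record the regularity of the right-hand side. Substituting $y=r+\sqrt{w}\,z$ with $z$ standard Gaussian gives $\int_{\R}S(y)\rho(r,w,y)\,dy=\E[S(r+\sqrt{w}Z)]$ and $\int_{\R}yS(y)\rho(r,w,y)\,dy=r\,\E[S(r+\sqrt{w}Z)]+w\,\E[S'(r+\sqrt{w}Z)]$, the latter via the Gaussian integration-by-parts identity $\E[Zg(Z)]=\E[g'(Z)]$; the $w=0$ values $S(r)$ and $rS(r)$ are recovered continuously. Using $0<S<1$, $\sup_x|xS'(x)|\le C_S$ and $\sup_x|xS''(x)|<\infty$, one checks by differentiating under the integral sign (and the same Stein identity) that $\bar F^\alpha$ and $\bar f^\alpha$ are locally Lipschitz in $(m,Q)$ on $\R^P\times(0,\infty)^P$, with constants uniform on bounded subsets; continuity in $t$ comes from $I^\alpha\in\mathcal{C}(\R)$. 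Cauchy--Lipschitz then gives a unique maximal solution $(m,Q)$ on some $[0,T^\ast)$, and $Q^{\alpha\alpha}_{00}$ stays $\ge 0$ because $\frac{d}{dt}Q^{\alpha\alpha}_{00}=(\sigma^\alpha)^2>0$ whenever $Q^{\alpha\alpha}_{00}=0$.

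Next I would derive the a priori bounds. The key point is that $\bar F^\alpha$ is \emph{globally} bounded: since $\rho(m^\beta,Q^{\beta\beta}_{00},\cdot)$ is a probability density and $0<S<1$, we get $|\bar F^\alpha(m,Q)|\le\tau^{-1}+\bar J^{max}\sum_{\beta}(2s_\beta+1)=:C_F$ on all of $\mathcal{R}$. Comparing \eqref{eqn Pbar} with $\dot v=2C_Fv+\sigma_{max}^2$, $v(0)=0$, yields $0\le Q^{\alpha\alpha}_{00}(t)\le v(t)$, so $Q$ remains in a fixed compact set on $[0,T^\ast)$; put $Q_{max}:=\sup_{[0,T]}v$. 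Then, from the formula for $\int yS(y)\rho$ above and the bound $\E[S'(r+\sqrt{w}Z)]\le(2\pi w)^{-1/2}\int_{\R}S'(x)\,dx=(2\pi w)^{-1/2}$, we get $|\bar f^\alpha(t,m,Q)|\le\sup_{s,\alpha}|I^\alpha(s)|+\bar J^{max}\,\bar s\,(\norm{m}+\sqrt{Q_{max}/(2\pi)})$, i.e.\ $\bar f^\alpha$ grows at most linearly in $\norm{m}$ once $Q$ is bounded. Hence $\frac{d}{dt}\norm{m}^2\le C(1+\norm{m}^2)$ on $[0,T^\ast)$, Gronwall bounds $\norm{m}$ on every finite interval, and therefore $T^\ast=\infty$; so $(m,Q)\in\mathcal{C}([0,T],\mathcal{R})$ exists and is unique.

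The step I expect to be delicate is the regularity of $\bar F^\alpha,\bar f^\alpha$ at the degenerate boundary $\{Q^{\alpha\alpha}_{00}=0\}$, where $\rho(\cdot,w,\cdot)$ collapses to a Dirac mass and $w$-derivatives of the Gaussian integrals may blow up as $w\downarrow0$; since $Q(0)=0$, uniqueness there is not automatic from the interior Lipschitz bound. I would handle it by noting that $Q^{\alpha\alpha}_{00}(t)\ge(\sigma_{min}^2/2)\,t$ for small $t$ (from $\frac{d}{dt}Q^{\alpha\alpha}_{00}\ge\sigma_{min}^2-2C_FQ^{\alpha\alpha}_{00}$), so the solution instantly enters $\{Q>0\}$, and the at-worst $O(w^{-1/2})$ growth of the Lipschitz constant near the boundary is time-integrable along $Q(t)\gtrsim t$; an Osgood/Gronwall argument then gives uniqueness on $[0,T]$. (If one additionally assumes, as holds for the usual sigmoids, that $S'$ and $S''$ are bounded, the field is genuinely locally Lipschitz up to $\{Q=0\}$ and this subtlety disappears.) Everything else is routine.
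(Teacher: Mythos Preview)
Your argument is correct and follows the same architecture as the paper's proof---local Lipschitz regularity plus a linear-growth bound to propagate the solution over all of $[0,T]$---but the two proofs differ in one instructive respect.

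Where you differentiate the Gaussian integrals directly in $(r,w)$ and therefore run into the degeneracy at $w=0$ (which you then repair with the $Q^{\alpha\alpha}_{00}(t)\gtrsim t$ lower bound and an Osgood argument), the paper sidesteps this entirely. Its Lemma~\ref{lem f F globally Lipschitz} obtains the Lipschitz bound by writing $\int yS(y)\rho(\mathfrak m_2,\mathfrak V_2,y)\,dy-\int yS(y)\rho(\mathfrak m,\mathfrak V,y)\,dy=\E[y_1S(y_1)-y_0S(y_0)]$ for the linear SDE $dy_t=(\mathfrak m_2-\mathfrak m)\,dt+\sqrt{\mathfrak V_2-\mathfrak V}\,dW_t$ with $y_0\sim\mathcal N(\mathfrak m,\mathfrak V)$, and then applies It\^o's formula. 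The variance increment becomes a second $y$-derivative rather than a $w$-derivative of the density, so the bound depends only on $\sup_y|\tfrac{d}{dy}(yS(y))|$ and $\sup_y|\tfrac{d^2}{dy^2}(yS(y))|$, both finite under the paper's hypotheses; the resulting Lipschitz constant $K$ is uniform in $\mathfrak V,\mathfrak V_2>0$ and extends by continuity to the boundary. Consequently the paper needs no special treatment of $\{Q=0\}$. For global existence the paper uses essentially your linear-growth observation (its Lemma~\ref{Lem Lipschitz Growth} is your bound $|\bar F^\alpha|\le C_F$ together with $|\bar f^\alpha|\lesssim 1+\norm{m}+\sqrt{\norm{Q}}$), but packages it as a Cauchy--Peano step of size $\upsilon$ independent of the current state, iterated across $[0,T]$, rather than as a maximal-solution-plus-Gronwall argument. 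Your route is equally valid and arguably more standard; the paper's It\^o trick is the cleaner way to handle the $Q=0$ issue you flagged.
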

\begin{proof}
The right hand sides of  \eqref{eqn mbar} 
and \eqref{eqn Pbar} satisfy a locally Lipschitz condition thanks to Lemma \ref{lem f F globally Lipschitz} below
(note also the definitions in \eqref{eqn F mut} and \eqref{eqn f mut}). This locally Lipschitz property extends to the case $Q_{00}^{\alpha\alpha} = 0$ (for any $\alpha\in [1,P]$) thanks to the continuity of $\bar{f}$ and $\bar{F}$ as functions of $Q(t)$.

We proceed inductively to prove that there exists a unique solution over the time interval $[0,T]$. Suppose that we have a unique solution $(m(t),Q(t))$ for $t\in [0,\gamma]$ for some $\gamma \in [0,T)$.  Write $m_\gamma := m(\gamma)$ and $Q_\gamma := Q(\gamma)$. Let $\epsilon = \max\lbrace \norm{m_\gamma},\norm{Q_\gamma}\rbrace$ and 
\begin{equation*}
B = \lbrace (m,Q)\in\mathcal{R} : \norm{m - m_\gamma} \leq \max(\epsilon,1), \norm{Q- Q_\gamma} \leq \max(\epsilon,1)\rbrace. 
\end{equation*}
It follows from this that if $(m,Q) \in B$, then
\begin{align}
\norm{m} &\leq \norm{m_\gamma} + \epsilon + 1 \leq 2\epsilon + 1 \label{eq m bound 1}\\
\norm{Q} &\leq \norm{Q_\gamma} + \epsilon + 1 \leq 2\epsilon+ 1.\label{eq Q bound 1}
\end{align}
From Lemma \ref{Lem Lipschitz Growth} (below) and \eqref{eq m bound 1}-\eqref{eq Q bound 1}, we observe that for all $\alpha \in [1,P]$ and $t\in [0,T]$,
\begin{align}
\sup_{(m,Q)\in B} \left|\bar{F}^\alpha(m,Q)m^\alpha + \bar{f}^\alpha(t,m,Q)\right| \leq A (4\epsilon + 3) \\
\sup_{(m,Q)\in B} \left|2\bar{F}^\alpha(m,Q)Q_{00}^{\alpha\alpha} + (\sigma^\alpha)^2\right| \leq A(2\epsilon +2).
\end{align}
Let
\begin{align}
\upsilon = \inf_{\epsilon \geq 0}\left\lbrace T-\gamma,\frac{\max(\epsilon,1)}{A(4\epsilon + 3)}, \frac{\max(\epsilon,1)}{A(2\epsilon + 2)}\right\rbrace.
\end{align}
Observe that $\upsilon  > 0$. By the Cauchy-Peano Theorem \cite[Theorem 2.1]{hartman1964ordinary}, there exists a solution over the time interval $[\gamma,\gamma +\upsilon]$ to \eqref{eqn mbar} and \eqref{eqn Pbar} with the initial condition such that $(m(\gamma),Q(\gamma)) = (m_\gamma,Q_\gamma)$.
This solution is unique because the right hand side of \eqref{eqn mbar}-\eqref{eqn Pbar} is locally Lipschitz over $B$. Since $\upsilon$ is independent of $\epsilon$, we may continue iteratively until we find that there exists a unique solution for all $t\in [0,T]$.
\end{proof}

\begin{lem}\label{lem f F globally Lipschitz}
There exists a constant $K$ such that for all $\mathfrak{m},\mathfrak{m}_2 \in \R$ and all $\mathfrak{V},\mathfrak{V}_2> 0$,
\begin{align}
\left| \int_{\R}S(y)\left(\rho(\mathfrak{m},\mathfrak{V},y) - \rho(\mathfrak{m}_2,\mathfrak{V}_2,y)\right) dy\right| \leq K \left(|\mathfrak{V} - \mathfrak{V}_2|+|\mathfrak{m}-\mathfrak{m}_2|\right),\label{assmp Holder 1}\\
\left| \int_{\R}yS(y)\left(\rho(\mathfrak{m},\mathfrak{V},y) - \rho(\mathfrak{m}_2,\mathfrak{V}_2,y)\right) dy\right| \leq K \left(|\mathfrak{V} - \mathfrak{V}_2|+|\mathfrak{m}-\mathfrak{m}_2|\right).\label{assmp Holder 2}
\end{align}
Here $\rho(r,w,y) := (2\pi w)^{-\frac{1}{2}}\exp\left( - \frac{(y-r)^2}{2w}\right)$ is a Gaussian kernel.
\end{lem}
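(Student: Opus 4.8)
The plan is to bound each of the two integrals by splitting the difference $\rho(\mathfrak{m},\mathfrak{V},\cdot) - \rho(\mathfrak{m}_2,\mathfrak{V}_2,\cdot)$ along a path in parameter space, first moving the mean from $\mathfrak{m}$ to $\mathfrak{m}_2$ at fixed variance, then moving the variance from $\mathfrak{V}$ to $\mathfrak{V}_2$ at fixed mean. Concretely, write
\[
\int_{\R} S(y)\big(\rho(\mathfrak{m},\mathfrak{V},y) - \rho(\mathfrak{m}_2,\mathfrak{V}_2,y)\big)dy = \int_{\R} S(y)\big(\rho(\mathfrak{m},\mathfrak{V},y) - \rho(\mathfrak{m}_2,\mathfrak{V},y)\big)dy + \int_{\R} S(y)\big(\rho(\mathfrak{m}_2,\mathfrak{V},y) - \rho(\mathfrak{m}_2,\mathfrak{V}_2,y)\big)dy,
\]
and bound each piece. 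The natural device is a change of variables followed by integration by parts, so as to move the derivative off the Gaussian and onto $S$, where the hypotheses $\sup_x|S'(x)x| \le C_S$ and $\sup_x|x\,S''(x)| < \infty$ (together with $0 < S < 1$) can be brought to bear.

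For the mean-shift term, substitute $y = \mathfrak{m} + \sqrt{\mathfrak{V}}\,z$ (resp. $y = \mathfrak{m}_2 + \sqrt{\mathfrak{V}}\,z$) to write it as $\int_{\R}\big(S(\mathfrak{m}+\sqrt{\mathfrak{V}}z) - S(\mathfrak{m}_2+\sqrt{\mathfrak{V}}z)\big)\phi(z)\,dz$, where $\phi$ is the standard normal density; by the mean value theorem this is $\int_{\R}(\mathfrak{m}-\mathfrak{m}_2)S'(\theta(z))\phi(z)\,dz$ for some intermediate point $\theta(z)$. One then needs $\int |S'(\theta(z))|\phi(z)\,dz < \infty$ uniformly; this follows from $|S'(x)| \le C_S/|x|$ for $|x|$ large, with the small-$x$ region handled by continuity and boundedness of $S'$, so the integral is finite and independent of the parameters. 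For the variance-shift term, differentiate $\rho$ in $w$ using the heat-equation identity $\partial_w \rho = \tfrac12 \partial_y^2 \rho$, so the term equals $\tfrac12\int_{\mathfrak{V}_2}^{\mathfrak{V}}\!\int_{\R} S(y)\,\partial_y^2\rho(\mathfrak{m}_2,w,y)\,dy\,dw$; integrate by parts twice to get $\tfrac12\int_{\mathfrak{V}_2}^{\mathfrak{V}}\!\int_{\R} S''(y)\rho(\mathfrak{m}_2,w,y)\,dy\,dw$, then after the rescaling $y = \mathfrak{m}_2 + \sqrt{w}\,z$ bound $|S''(\mathfrak{m}_2 + \sqrt{w}z)|$ using $\sup_x|x S''(x)| =: C_2$, giving a factor $\le C_2/|\mathfrak{m}_2 + \sqrt{w}z|$, whose integral against $\phi(z)$ is again uniformly finite. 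This yields a bound of the form $K'|\mathfrak{V}-\mathfrak{V}_2|$. The second inequality, with the extra factor $y$, is handled the same way, writing $yS(y) = yS(y)$ and noting that after rescaling the factor $y = \mathfrak{m}+\sqrt{\mathfrak{V}}z$ contributes a polynomial in $z$ that is integrable against $\phi$; the integration-by-parts boundary terms vanish by Gaussian decay, and the derivatives falling on the factor $y$ produce only lower-order bounded contributions, while the derivatives on $S$ are controlled exactly as before.

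The main obstacle is the uniformity of the constant $K$ as $\mathfrak{V}, \mathfrak{V}_2 \to 0^+$: the naive bounds $\|\partial_y\rho\|_\infty$ and $\|\partial_y^2\rho\|_\infty$ blow up like $\mathfrak{V}^{-1}$ and $\mathfrak{V}^{-3/2}$, so one cannot simply put absolute values inside. The resolution is precisely the integration-by-parts step, which trades each $y$-derivative on $\rho$ for a derivative on $S$ and hence converts the singular $\mathfrak{V}^{-k/2}$ prefactor into a benign $\int|z|^{\,j}\phi(z)\,dz$ after rescaling; the hypotheses $|S'(x)x|, |S''(x)x|$ bounded are exactly what make the resulting integrals converge uniformly near the origin of $z$-space. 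Care is also needed at $z = 0$ in the bounds $|S'(x)| \le C_S/|x|$ etc., but there one simply uses instead the global bound $\sup_x|S'(x)| < \infty$ (finite since $S' $ is continuous, tends to $0$ at $\pm\infty$ by $|S'(x)x|\le C_S$, and is thus bounded), splitting the $z$-integral into $|z|\le 1$ and $|z|>1$.
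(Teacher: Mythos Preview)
Your strategy and the paper's are the same idea in different packaging. The paper couples $N(\mathfrak{m},\mathfrak{V})$ to $N(\mathfrak{m}_2,\mathfrak{V}_2)$ by the interpolating SDE $dy_t=(\mathfrak{m}_2-\mathfrak{m})\,dt+\sqrt{\mathfrak{V}_2-\mathfrak{V}}\,dW_t$ with $y_0\sim N(\mathfrak{m},\mathfrak{V})$, $y_1\sim N(\mathfrak{m}_2,\mathfrak{V}_2)$, and applies It\^o's formula to $g(y)=yS(y)$ (and says the case $g=S$ is similar), obtaining
\[
\int g(y)\bigl(\rho(\mathfrak{m}_2,\mathfrak{V}_2,y)-\rho(\mathfrak{m},\mathfrak{V},y)\bigr)\,dy
=(\mathfrak{m}_2-\mathfrak{m})\int_0^1\!\E[g'(y_t)]\,dt+\tfrac12(\mathfrak{V}_2-\mathfrak{V})\int_0^1\!\E[g''(y_t)]\,dt,
\]
so that only $\sup_y|g'|$ and $\sup_y|g''|$ are needed. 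Your heat-equation identity $\partial_w\rho=\tfrac12\partial_y^2\rho$ followed by integration by parts is exactly the analytic transcription of this It\^o step; the only structural difference is that you interpolate along an L-shaped path (mean first, then variance) while the paper takes the diagonal. The probabilistic version is slightly cleaner because the uniformity in $\mathfrak{V},\mathfrak{V}_2$ is automatic: no singular derivatives of $\rho$ ever appear.

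There is, however, a concrete error in your execution of the variance step. You claim that after rescaling one has $|S''(\mathfrak{m}_2+\sqrt{w}\,z)|\le C_2/|\mathfrak{m}_2+\sqrt{w}\,z|$ with ``integral against $\phi(z)$ again uniformly finite.'' That integral is not even finite: the singularity $1/|u|$ is non-integrable in one dimension, so $\int_{\R}|\mathfrak{m}_2+\sqrt{w}\,z|^{-1}\phi(z)\,dz=\infty$ whenever the pole sits at a finite $z$. Your closing paragraph patches the analogous issue for $S'$ by invoking $\sup_x|S'(x)|<\infty$, but you never do the same for $S''$, and the bound $|xS''(x)|\le C_2$ alone does not control $S''$ near the origin. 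The fix---and what the paper actually does---is simply to bound $\E[g''(Y)]$ by $\sup_y|g''(y)|$: for $g(y)=yS(y)$ this is $\sup_y|2S'(y)+yS''(y)|<\infty$ directly from the stated hypotheses, and for $g=S$ one needs $\sup_y|S''(y)|<\infty$, which holds once $S''$ is continuous (true for any concrete sigmoid, and tacitly used by the paper as well when it says the first inequality ``follows very similarly'').
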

\begin{proof}
We prove the second of the above identities (the first follows very similarly). Assume without loss of generality that $\mathfrak{V}_2 \geq \mathfrak{V}$. 
Define $y_t$ to satisfy the following 1-dimensional SDE. Let $y_0$ be Gaussian, with mean $\mathfrak{m}$ and variance $\mathfrak{V}$, and $(W_t)$ a standard Wiener Process, such that  
\[
dy_t = (\mathfrak{m}_2 - \mathfrak{m})dt + \sqrt{\mathfrak{V}_2 - \mathfrak{V}}dW_t.
\]
Using standard theory we see that $y_t$ possesses a unique strong solution possessing Gaussian marginals. For any $t\geq 0$, $y_t$ has mean $\mathfrak{m} + t(\mathfrak{m}_2 - \mathfrak{m})$ and variance $\mathfrak{V} + t(\mathfrak{V}_2 - \mathfrak{V})$. Using Ito's Lemma,
\begin{multline*}
\int_{\R}yS(y)\left(\rho(\mathfrak{m}_2,\mathfrak{V}_2,y)-\rho(\mathfrak{m},\mathfrak{V},y)\right) dy = \E\left[ y_1S(y_1) - y_0S(y_0)\right] \\= \E\left[ \int_0^1\left((\mathfrak{m}_2 - \mathfrak{m})\frac{d}{dy_t}(y_tS(y_t)) + \frac{1}{2}(\mathfrak{V}_2 - \mathfrak{V})\frac{d^2}{dy_t^2}(y_t S(y_t))\right)dt\right] \\
= (\mathfrak{m}_2 - \mathfrak{m})\E\left[ \int_0^1\frac{d}{dy_t}(y_tS(y_t))dt\right] + \frac{1}{2}(\mathfrak{V}_2 - \mathfrak{V})\E\left[\int_0^1\frac{d^2}{dy_t^2}(y_t S(y_t))dt\right].
\end{multline*} 
Now, by Lemma \ref{Lem Globally Lipschitz} (below) and the assumptions on $S$, there is a constant $K_2$ such that $\sup_{y \in \R} | \frac{d}{dy}\big(yS(y)\big)| < K_2$ and $\sup_{y \in \R} \left| \frac{d^2}{dy^2}\big( yS(y)\big)\right| < 2K_2$. We thus see that
\[
\left|\int_{\R}yS(y)\left(\rho(\mathfrak{m},\mathfrak{V},y) - \rho(\mathfrak{m}_2,\mathfrak{V}_2,y)\right) dy\right| \leq K_2\left(\left|\mathfrak{m}_2-\mathfrak{m}\right| + \left|\mathfrak{V}_2 - \mathfrak{V}\right|\right).
\]
\end{proof}

\begin{lem}\label{Lem Lipschitz Growth}
There exists $A > 0$, such that for all $(m,Q) \in \mathcal{R}$, and all $\alpha \in [1,P]$,
\begin{align*}
\norm{\bar{F}^\alpha(m,Q)m^\alpha + \bar{f}^\alpha(t,m,Q)} &\leq A\left(\norm{m} + \norm{Q} + 1\right) \\ 
\norm{2\bar{F}^\alpha(m,Q)Q^{\alpha\alpha} + (\sigma^\alpha)^2} &\leq A\left(\norm{Q}+1\right).
\end{align*}
\end{lem}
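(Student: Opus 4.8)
The plan is to derive both inequalities directly from the explicit Gaussian integrals appearing in the definitions \eqref{eqn F mut}--\eqref{eqn f mut} of $\bar{F}^\alpha$ and $\bar{f}^\alpha$, using only that $0<S<1$ together with a bound on the first absolute moment of a Gaussian. Note that for each fixed $\alpha$ the quantity $\bar F^\alpha(m,Q)m^\alpha+\bar f^\alpha(t,m,Q)$ is a scalar, so the norm on the left-hand side is just the absolute value. Throughout I would treat the nondegenerate case $Q_{00}^{\beta\beta}>0$ and the degenerate case $Q_{00}^{\beta\beta}=0$ in parallel.

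First I would observe that $\bar F^\alpha$ is uniformly bounded on $\mathcal{R}$. Since $0<S(y)<1$ for every $y$, the integral $\int_{\R}S(y)\rho(m^{\beta},Q_{00}^{\beta\beta},y)\,dy$ lies in $(0,1)$, and in the degenerate case it is replaced by $S(m^\beta)\in(0,1)$; hence from \eqref{eqn F mut},
\[
\bigl|\bar F^\alpha(m,Q)\bigr|\le \frac1\tau+\bar J^{max}\sum_{\beta=1}^P(2s_\beta+1)=:C_F,
\]
uniformly over $(m,Q)\in\mathcal{R}$ and $\alpha\in[1,P]$. This immediately yields the second inequality: $|2\bar F^\alpha(m,Q)Q_{00}^{\alpha\alpha}+(\sigma^\alpha)^2|\le 2C_F\norm{Q}+\sigma_{max}^2\le A(\norm{Q}+1)$ once $A$ is taken large enough.

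For the first inequality the additional ingredient is a linear-growth estimate on $\int_{\R}yS(y)\rho(m^{\beta},Q_{00}^{\beta\beta},y)\,dy$. Writing this as $\E[Z\,S(Z)]$ for $Z\sim\mathcal{N}(m^\beta,Q_{00}^{\beta\beta})$ and using $0<S<1$ gives $|\E[Z\,S(Z)]|\le\E|Z|\le|m^\beta|+\sqrt{Q_{00}^{\beta\beta}}\,\E|\mathcal{N}(0,1)|\le\norm{m}+\sqrt{\norm{Q}}$; in the degenerate case the integral is $m^\beta S(m^\beta)$, which obeys $|m^\beta S(m^\beta)|\le\norm{m}$. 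Since $I^\alpha$ is continuous and bounded, $\sup_{t\in[0,T]}|I^\alpha(t)|<\infty$, so combining with \eqref{eqn f mut} gives $|\bar f^\alpha(t,m,Q)|\le\sup_{t}|I^\alpha(t)|+\bar J^{max}\sum_\beta(2s_\beta+1)(\norm{m}+\sqrt{\norm{Q}})$. Adding $|\bar F^\alpha(m,Q)m^\alpha|\le C_F\norm{m}$ and using $\sqrt{\norm{Q}}\le 1+\norm{Q}$, I would conclude $|\bar F^\alpha(m,Q)m^\alpha+\bar f^\alpha(t,m,Q)|\le A(\norm{m}+\norm{Q}+1)$ after enlarging $A$.

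There is no serious obstacle here; the estimate is routine. The two points needing a little care are the degenerate-variance case $Q_{00}^{\beta\beta}=0$ --- handled by the replacement rules stated before \eqref{eqn F mut}, which give values bounded by exactly the same quantities --- and the passage from the $\sqrt{\norm{Q}}$ that arises naturally in the Gaussian moment bound to the affine function $\norm{Q}+1$ appearing in the statement. One could instead estimate $\int yS(y)\rho\,dy$ by integration by parts using the hypothesis $\sup_x|S'(x)x|\le C_S$, but the direct bound via $|yS(y)|\le|y|$ together with the first absolute moment of a Gaussian is cleaner and suffices.
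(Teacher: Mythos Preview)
Your proof is correct and follows essentially the same approach as the paper: both arguments bound $\bar F^\alpha$ uniformly via $0<S<1$, then control $\bar f^\alpha$ by estimating $\int |y|\rho(m^\beta,Q_{00}^{\beta\beta},y)\,dy\le |m^\beta|+\text{const}\cdot\sqrt{Q_{00}^{\beta\beta}}$ and absorbing $\sqrt{\norm{Q}}$ into $\norm{Q}+1$. The only cosmetic differences are that the paper computes the Gaussian first absolute moment as $\sqrt{2/\pi}\sqrt{Q_{00}^{\beta\beta}}$ explicitly and treats the degenerate case $Q_{00}^{\beta\beta}=0$ in a single remark at the start rather than in parallel.
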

\begin{proof}
It is clear that the bounds hold if $Q_{00}^{\alpha_i\alpha_i} = 0$ for some $\lbrace \alpha_i\rbrace \subset [1,P]$ (since the Gaussian Integration converges to a $\delta$-function). We therefore assume that for all $\alpha \in [1,P]$, $Q^{\alpha\alpha}_{00} > 0$. We note the following bounds (making use of the triangle inequality)
\begin{align*}
\left| \bar{f}^{\alpha}(t,m,Q)\right| &\leq |I^\alpha(t)| + \sum_{\beta=1}^P(2s_\beta + 1)\bar{J}^{max} \int_{\R} \big|y\big| (2\pi Q_{00}^{\beta\beta})^{-\frac{1}{2}}\exp\left( - \frac{(y-m^{\beta})^2}{2Q_{00}^{\beta\beta}}\right)dy\\
&\leq \norm{I(t)} + \\  \sum_{\beta=1}^P(2s_\beta + 1)\bar{J}^{max}&\left( |m^{\beta}| + \int_{\R} | y-m^{\beta} |(2\pi Q_{00}^{\beta\beta})^{-\frac{1}{2}}\exp\left( - \frac{(y-m^{\beta})^2}{2Q_{00}^{\beta\beta}}\right)dy\right)\\
&= \norm{I(t)} + \sum_{\beta=1}^P(2s_\beta + 1)\bar{J}^{max}\left( |m^{\beta}|  + \frac{\sqrt{2}}{\sqrt{\pi}}\sqrt{Q_{00}^{\beta\beta}}\right),
\end{align*}
using a standard Gaussian identity. Hence, recalling that $\bar{s} = \sum_{\beta=1}^P (2s_\beta+1)$,
\begin{align}\label{eq f lipschitz}
\norm{\bar{f}(t,m,Q)} \leq \norm{I(t)} + \bar{s}\bar{J}^{max}\left(\norm{m} + \norm{Q} + 1\right).
\end{align}
We also observe that
\begin{equation*}\label{eq F lipschitz}
\left| \bar{F}^{\alpha}(m,Q)\right| \leq \frac{1}{\tau} + \bar{J}^{max}\bar{s},
\end{equation*}
from which the second bound quickly follows.
\end{proof}
\begin{lem}\label{Lem Globally Lipschitz}
The function $x\mapsto xS(x)$ is globally Lipschitz in $x$.
\end{lem}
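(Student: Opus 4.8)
The plan is to reduce the global Lipschitz property to a uniform bound on the derivative of $x\mapsto xS(x)$, which then gives the conclusion immediately by the mean value theorem.

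First I would note that since $S$ is twice differentiable (in particular $\mathcal{C}^1$), the function $g(x) := xS(x)$ is differentiable everywhere, with
\[
g'(x) = S(x) + x\,S'(x).
\]
Next I would bound the two terms separately. By the defining property of the sigmoid, $0 < S(x) < 1$, so $|S(x)| \leq 1$ for all $x$. By the standing assumption on $S$, we have $\sup_{x\in\R}|S'(x)x| \leq C_S$, so $|x\,S'(x)| \leq C_S$ for all $x$. Combining these, $|g'(x)| \leq 1 + C_S$ for every $x \in \R$.

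Finally, for arbitrary $x,y\in\R$, the mean value theorem gives $|g(x)-g(y)| = |g'(\zeta)|\,|x-y| \leq (1+C_S)|x-y|$ for some $\zeta$ between $x$ and $y$, so $g$ is globally Lipschitz with constant $1+C_S$.

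There is essentially no obstacle here: the only thing worth flagging is that the hypothesis $\sup_x |S'(x)x| \leq C_S$ is precisely what neutralises the unbounded factor $x$ in $g'$, while the sigmoid bound handles the remaining term $S(x)$. (The companion assumption $\sup_x |x\,S''(x)| < \infty$ is not needed for this lemma; together with the continuity of $S'$ on compact sets it will instead be used later to bound $\frac{d^2}{dy^2}\big(yS(y)\big)$, as invoked in Lemma \ref{lem f F globally Lipschitz}.)
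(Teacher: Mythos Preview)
Your proof is correct and follows essentially the same approach as the paper: compute $\frac{d}{dx}(xS(x)) = S(x) + xS'(x)$, observe this is bounded by the assumptions on $S$, and invoke the Mean Value Theorem. Your version is simply more explicit, spelling out the bound $1 + C_S$ and identifying which hypothesis controls each term, whereas the paper's proof compresses this into a single sentence.
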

\begin{proof}
We have that $\frac{d}{dx}\big(xS(x)\big) = xS'(x) + S(x)$. This is bounded (by assumption on $S$). Therefore by the Mean Value Theorem, $xS(x)$ must be globally Lipschitz.
\end{proof}
\subsection{Propagation of Chaos}
In this section we prove the propagation of chaos property. This is stated in Theorem \ref{theorem3}, and amounts to
 proving that the first Wasserstein distance between the laws of $\bar{X}^j$ and $X^j$ (considered as random variables on $\mathcal{C}([0,T],\T)$ goes to zero as $n\to \infty$. Here, for $j\in \Z$, 
\begin{multline}\label{meanelec3}
d\bar{X}_t^{j,\alpha,p} = -\frac{\bar{X}_t^{j,\alpha,p}}{\tau} + I^\alpha(t) + \int_{\T}\sum_{\beta=1}^P\sum_{q=-s_\beta}^{s_\beta} 
 \bar{J}^{\alpha\beta}\left(\bar{X}_t^{j,\alpha,p}-y^{\beta,q}\right) S(y^{\beta,q})
\bar{\mu}_t(dy) dt\\ + \sigma^\alpha dW_t^{j,\alpha,p}.
\end{multline}
The Wiener Processes $(W^{j,\alpha,p})$ are the same as in \eqref{networkmodel}. Clearly the law of $\bar{X}^j$ is the
 same as the law of $\bar{X}^k$, and these laws are the same as the law of $\bar{X}$ in \eqref{meanelec1}.

As we have already noted, we cannot directly use the proofs in \cite{sznitman1991topics} because the interaction terms are not 
uniformly Lipschitz. Instead, we will exploit the fact that $\bar{X}$ is Gaussian, so that we may use Borell's 
Inequality to show that the tails of $\sup_{s\in [0,T]}|\bar{X}^{\alpha,p}_s|$ decay exponentially.
\begin{proof}[Proof of Theorem \ref{theorem3}]
Throughout this proof, when summing over indices, the ranges are always such that $i,j \in [-n,n]$,
 $\alpha,\beta \in [1,P]$, $p\in [-s_\alpha,s_\alpha]$ and $q\in [-s_\beta,s_\beta]$. 
We observe that for any $i\in [-n,n]$, $\alpha\in [1,P]$ and $p\in [-s_\alpha,s_\alpha]$,
\begin{multline*}
X^{i,\alpha,p}_t - \bar{X}^{i,\alpha,p}_t = \int_0^t \bigg[ -\tau^{-1}\left( X^{i,\alpha,p}_s 
- \bar{X}^{i,\alpha,p}_s\right) + \\ (2n+1)^{-1}\sum_{j,\beta,q}\bar{J}^{\alpha\beta}\big( \left(X_s^{i,\alpha,p} - X_s^{j,\beta,q}\right)S(X_s^{j,\beta,q})- E^{\bar{\mu}_s}\big[\left(\bar{X}_s^{i,\alpha,p} - y^{\beta,q}\right)S(y^{\beta,q})\big]\big)\bigg] ds \\
= \int_0^t\bigg( -\tau^{-1}\left( X^{i,\alpha,p}_s - \bar{X}^{i,\alpha,p}_s\right) + \\ (2n+1)^{-1}\sum_{j,\beta,q}\bar{J}^{\alpha\beta}\left[ \left(X_s^{i,\alpha,p} - X_s^{j,\beta,q}\right)S(X_s^{j,\beta,q}) -  \left(\bar{X}_s^{i,\alpha,p} - X_s^{j,\beta,q}\right)S(X_s^{j,\beta,q})\right. \\ \left.+ \left(\bar{X}_s^{i,\alpha,p} - X_s^{j,\beta,q}\right)S(X_s^{j,\beta,q}) - \left(\bar{X}_s^{i,\alpha,p} - \bar{X}_s^{j,\beta,q}\right)S(\bar{X}_s^{j,\beta,q})\right.\\ \left.+ \left(\bar{X}_s^{i,\alpha,p} - \bar{X}_s^{j,\beta,q}\right)S(\bar{X}_s^{j,\beta,q}) - E^{\bar{\mu}_s}\big[\left(\bar{X}_s^{i,\alpha,p} - y^{\beta,q}\right)S(y^{\beta,q})\big]\right] \bigg)ds.
\end{multline*}
For any $g\in\mathcal{C}([0,t],\R)$, let $|g|_t^* = \sup_{s\in [0,t]} |g_s|$. We thus find that
\begin{multline*}
 \left|X^{i,\alpha,p}- \bar{X}^{i,\alpha,p}\right|_T^* \leq \int_0^T\bigg( \tau^{-1} \left|X^{i,\alpha,p}_t - \bar{X}^{i,\alpha,p}_t\right| + \\ (2n+1)^{-1}\sum_{j,\beta,q}\bar{J}^{max}\bigg( \left|X^{i,\alpha,p}_t - \bar{X}^{i,\alpha,p}_t\right| + \left|\bar{X}^{i,\alpha,p}_t\right|\left|S(X_t^{j,\beta,q}) - S(\bar{X}_t^{j,\beta,q})\right| + \\
 \left| X_t^{j,\beta,q}S(X_t^{j,\beta,q}) - \bar{X}_t^{j,\beta,q}S(\bar{X}_t^{j,\beta,q})\right| + \\\left.
 + \left|\left(\bar{X}_t^{i,\alpha,p} - \bar{X}_t^{j,\beta,q}\right)S(\bar{X}_t^{j,\beta,q}) - E^{\bar{\mu}_t}\left[\left(\bar{X}_t^{i,\alpha,p} - y^{\beta,q}\right)S(y^{\beta,q})\right]\right|\bigg)\right) dt.
\end{multline*}
Let $S(X^j) \in \mathcal{C}([0,T),\T)$ be defined as $S(X^j)^{\beta,q}_t := S(X_t^{j,\beta,q})$. Now by Lemma \ref{Lem Globally Lipschitz}, the function $xS(x)$ is globally Lipschitz in $x$. Thus summing over $i$, we find that for some constant $L > 0$,
\begin{multline*}
 \sum_{i}\left|X^{i,\alpha,p}- \bar{X}^{i,\alpha,p}\right|_T^* \leq \int_0^T\bigg( L \sum_{i}\left|X^{i,\alpha,p} - \bar{X}^{i,\alpha,p}\right|_t^* + \\ (2n+1)^{-1}\sum_{i,j,\beta,q}\bar{J}^{max}\big( \left|\bar{X}^{i,\alpha,p}\right|_t^*\left|S(X^{j,\beta,q}) - S(\bar{X}^{j,\beta,q})\right|_t^* + \\
  \left|\left(\bar{X}_t^{i,\alpha,p} - \bar{X}_t^{j,\beta,q}\right)S(\bar{X}_t^{j,\beta,q}) - \mathbb{E}^{\bar{\mu}_t(y)}\left[\left(\bar{X}_t^{i,\alpha,p} - y^{\beta,q}\right)S(y^{\beta,q})\right]\right|\big) \bigg)dt.
\end{multline*}
Write $\norm{X}_T^* = \sup_{\alpha,p} |X^{\alpha,p}|_T^*$. Taking expectations, and using symmetry, we find that
\begin{multline*}
(2n+1)\E\left[\norm{X^{0}- \bar{X}^{0}}_T^*\right] \leq \int_0^T\bigg[ (2n+1)L\E\left[\norm{X^{0} - \bar{X}^{0}}_t^* \right]+ \\ \bar{s}\bar{J}^{max}\sum_{j}\E\left[ \norm{\bar{X}^{0}}_t^*\norm{S(X^{j}) - S(\bar{X}^{j})}_t^*\right]\bigg] dt + B(T).
\end{multline*}
Here, 
\begin{multline*}
 B(T) = \\T\bar{J}^{max}\sup_{t\in [0,T]}\sup_{\alpha,p}\sum_{\beta,q} \E\left[\sum_{j} \left|\left(\bar{X}_t^{i,\alpha,p} - \bar{X}_t^{j,\beta,q}\right)S(\bar{X}_t^{j,\beta,q}) - \mathbb{E}^{\bar{\mu}_t}\left[\left(\bar{X}_t^{i,\alpha,p} - y^{\beta,q}\right)S(y^{\beta,q})\right]\right|\right] \\
 \leq T\bar{J}^{max}\sup_{\alpha,p}\sup_{t\in [0,T]}\sum_{\beta,q}\E\left[\bigg(\sum_j R_t(i,j,\alpha,p,\beta,q)\bigg)^2\right]^{\frac{1}{2}},
\end{multline*}
through Jensen's Inequality, where
\[
R_t(i,j,\alpha,p,\beta,q) = \left(\bar{X}_t^{i,\alpha,p} - \bar{X}_t^{j,\beta,q}\right)S(\bar{X}_t^{j,\beta,q}) - \mathbb{E}^{\bar{\mu}_t}\left[\left(\bar{X}_t^{\alpha,p} - y^{\beta,q}\right)S(y^{\beta,q})\right].
\]
Now if $j\neq k$, $\E\left[ R_t(i,j,\alpha,p,\beta,q)R_t(i,k,\alpha,p,\beta,q)\right] = 0$, since the integrands are independent and of zero mean. By \eqref{eqn norm bound}, $\E\left[ R_t(i,j,\alpha,p,\beta,q)^2\right]$ possesses an upper bound which is uniform for all $t\in [0,T]$ and all indices. This means that there exists a constant $K(T)$ such that $B(T) \leq K(T) (2n+1)^{\frac{1}{2}}$. 

For some $M > 0$, let $A_M = \lbrace X \in \mathcal{C}([0,T],\T) : \norm{X}_T^* \geq M \rbrace$ and
\begin{equation}
\kappa_M = \int_{A_M} \norm{\bar{X}^0}_T^* d\mathbb{P}(\bar{X}^0).\label{defn kappaM}
\end{equation}
We thus find that, since $\norm{S(X^{j}) - S(\bar{X}^{j})}_t^* <1$,
\begin{equation*}
\E\left[ \norm{\bar{X}^{0}}_t^*\norm{S(X^{j}) - S(\bar{X}^{j})}_t^*\right] \leq M\E\left[\norm{X^j - \bar{X}^j}_t^*\right] + \kappa_M.
 \end{equation*}
Therefore
 \begin{multline*}
 \E\left[\norm{X^{0}- \bar{X}^{0}}_T^*\right] \leq \\ \int_0^T\left[ (M\bar{s}\bar{J}^{max}+L)\E\left[\norm{X^{0} - \bar{X}^{0}}_t^* \right]+ \bar{s}\bar{J}^{max}\kappa_M\right] dt + (2n+1)^{-\frac{1}{2}}K(T).
 \end{multline*}
 Through Gronwall's Inequality, since trivially $\E\left[\norm{X^{0}- \bar{X}^{0}}_0^*\right] =0$, 
 \[
  \E\left[\norm{X^{0}- \bar{X}^{0}}_T^*\right] \leq \left((2n+1)^{-\frac{1}{2}}K(T)+T\bar{s}\bar{J}^{max}\kappa_M\right)\exp(T(M\bar{s}\bar{J}^{max}+L)).
 \]
 By Lemma \ref{Lemma kappa M bound}, as $M\to \infty$,
 \begin{equation} \label{eq temporary 15}
 T\bar{s}\bar{J}^{max}\kappa_M\exp(T(M\bar{s}\bar{J}^{max}+L)) \to 0.
 \end{equation}
 Thus by taking $M,n\to \infty$, such that $(2n+1)^{-\frac{1}{2}}\exp(T(M\bar{s}\bar{J}^{max}+L)) \to 0$, 
 \[
 \E\left[\norm{X^{0}- \bar{X}^{0}}_T^*\right] \to 0. 
 \]

\end{proof}
For the following lemma, note that $\kappa_M$ is defined in \eqref{defn kappaM}. This Lemma is needed for \eqref{eq temporary 15} in the previous proof.
\begin{lem}\label{Lemma kappa M bound}
There exist constants $\delta,M_0 > 0$ such that for all $M > M_0$,
\[
\kappa_M \leq \exp\left( - \delta M^2 \right).
\]
\end{lem}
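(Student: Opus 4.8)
The plan is to exploit the Gaussianity of $\bar{X}^0$ together with Borell's Inequality, in the same spirit as the bound \eqref{eqn norm bound}. Recall that $\kappa_M = \E\big[\norm{\bar{X}^0}_T^*\,\mathbf{1}_{\{\norm{\bar{X}^0}_T^* \geq M\}}\big]$, and that by Lemmas \ref{lemma equivalence}--\ref{Lem Existence} the process $(\bar{X}^0_t)_{t\in[0,T]}$ is a continuous Gaussian process whose mean $m(t)$ and covariance $Q(t)$ are continuous, hence bounded, on $[0,T]$. First I would center: write $\bar{X}^{0,\alpha,p}_t = m^\alpha(t) + Z^{\alpha,p}_t$ with $Z$ a centered continuous Gaussian process, set $C_m := \sup_{\alpha\in[1,P],\, t\in[0,T]}|m^\alpha(t)| < \infty$ and $\mathcal{Z} := \norm{Z}_T^* = \sup_{\alpha,p}\sup_{t\in[0,T]}|Z^{\alpha,p}_t|$, so that $\norm{\bar{X}^0}_T^* \le C_m + \mathcal{Z}$ pointwise and therefore
\[
\kappa_M \le \E\big[(C_m + \mathcal{Z})\,\mathbf{1}_{\{\mathcal{Z}\ge M - C_m\}}\big].
\]

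Next I would observe that $\mathcal{Z}$ is the supremum of the centered Gaussian family $\{\pm Z^{\alpha,p}_t : \alpha\in[1,P],\ p\in[-s_\alpha,s_\alpha],\ t\in[0,T]\}$; it is finite almost surely and $\mu_Z := \E[\mathcal{Z}] < \infty$ (which also follows from \eqref{eqn norm bound} and Cauchy--Schwarz). Its maximal variance is $\sigma_Z^2 := \sup_{\alpha\in[1,P],\, t\in[0,T]} Q^{\alpha\alpha}_{00}(t)$, since the off-diagonal covariances vanish; this is finite by continuity and strictly positive because each $\sigma^\alpha>0$ forces $Q^{\alpha\alpha}_{00}(t)>0$ for $t>0$. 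Borell's Inequality (\cite[Theorem 2.1]{adler1990introduction}) then yields, for every $u\ge 0$,
\[
\mathbb{P}\big(\mathcal{Z} \ge \mu_Z + u\big) \le \exp\!\Big(-\tfrac{u^2}{2\sigma_Z^2}\Big).
\]

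Then, for $M$ large enough that $N := M - C_m > \mu_Z$, I would split the truncated expectation as
\[
\E\big[(C_m+\mathcal{Z})\,\mathbf{1}_{\{\mathcal{Z}\ge N\}}\big] = C_m\,\mathbb{P}(\mathcal{Z}\ge N) + N\,\mathbb{P}(\mathcal{Z}\ge N) + \int_N^\infty \mathbb{P}(\mathcal{Z}> t)\,dt,
\]
and bound each term using the tail estimate above together with the standard Gaussian bound $\int_a^\infty e^{-r^2/(2s^2)}\,dr \le \tfrac{s^2}{a}e^{-a^2/(2s^2)}$ (valid for $a>0$), obtaining
\[
\kappa_M \le \Big(C_m + N + \tfrac{\sigma_Z^2}{N-\mu_Z}\Big)\exp\!\Big(-\tfrac{(N-\mu_Z)^2}{2\sigma_Z^2}\Big).
\]
Since $N-\mu_Z = M - C_m - \mu_Z$, the exponent equals $-\tfrac{M^2}{2\sigma_Z^2}\big(1+o(1)\big)$ as $M\to\infty$ while the prefactor grows only linearly in $M$; hence for any fixed $\delta < \tfrac{1}{2\sigma_Z^2}$ there is an $M_0>0$ with $\kappa_M \le \exp(-\delta M^2)$ for all $M>M_0$, which is the assertion.

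The only genuinely delicate point is the invocation of Borell's Inequality: one must verify that $\mathcal{Z}$ is almost surely finite and that the Gaussian family is separable so that the concentration bound applies, and one must correctly identify the maximal variance. Both are immediate from what precedes: continuity of $(\bar{X}^0_t)$ gives separability, \eqref{eqn norm bound} gives $\E\big[(\norm{\bar X^0}_T^*)^2\big]<\infty$, hence almost sure finiteness and $\mu_Z<\infty$, and the vanishing of the off-diagonal covariances gives $\mathrm{Var}(Z^{\alpha,p}_t) = Q^{\alpha\alpha}_{00}(t)$. Everything else is the routine integration in the displays above.
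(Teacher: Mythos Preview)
Your proof is correct and follows essentially the same approach as the paper: center the Gaussian process, invoke Borell's Inequality to get a Gaussian tail for the supremum, and then convert that tail into a bound on the truncated expectation $\kappa_M$. The only cosmetic differences are that the paper applies Borell componentwise and then passes to $\norm{\bar X}_T^*$, and replaces your layer-cake integral $\int_N^\infty \mathbb{P}(\mathcal{Z}>t)\,dt$ by the discrete sum $\sum_{k\ge 0}(M+k+1)\,\mathbb{P}(\norm{\bar X}_T^*\ge M+k)$; the underlying idea and the resulting constant $\delta$ (any $\delta<1/(2\sigma_Z^2)$, versus the paper's choice $\delta=1/(8\sup_s\norm{Q(s)})$) are the same.
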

\begin{proof}
Let $\delta = \frac{1}{8}\left(\sup_{s\in [0,T]}\norm{Q(s)}\right)^{-1}$, where $(m,Q)$ is the solution of the mean field equations \eqref{eqn mbar}-\eqref{eqn Pbar}. We saw in the previous section that the law of $\bar{X}$ (which is the same as the law of $\bar{X}^j$) is Gaussian. Moreover the means $m^{\alpha,p}(s)$ and variances $Q^{\alpha\alpha}_{pp}(s)$ are continuous in $s$, and therefore bounded for $s\in [0,T]$. Let $\bar{Y}^{\alpha,p}_s := \bar{X}^{\alpha,p}_s - m^{\alpha,p}(s)$ and $\tilde{Q}^{\alpha}_p = \sup_{s\in [0,T]}Q^{\alpha\alpha}_{pp}(s)$. By Borell's Inequality (see \cite[Theorem 2.1]{adler1990introduction}), $\E\left[\left| \bar{Y}^{\alpha,p} \right|_T^*\right] < \infty$ and for all $\lambda > 0$, 
\begin{equation*}
\mathcal{P}\left(\left| \bar{Y}^{\alpha,p} - \E\left[\left| \bar{Y}^{\alpha,p} \right|_T^*\right] \right|_T^*> \lambda\right) \leq 2\exp\left( - \frac{1}{2\tilde{Q}^\alpha_p}\lambda^2\right).
\end{equation*}
Since $\sup_{s\in [0,T]}\left|m^{\alpha,p}_s\right| < \infty$ for all $\alpha\in [1,P]$ and $p\in [-s_\alpha,s_\alpha]$, there must exist an $M_0$ such that for all $M\geq M_0$
\begin{equation*}
\mathbb{P}\left( \norm{X}_T^* \geq M\right) \leq (M+1)^{-1}\exp\left( -2\delta M^2\right).
\end{equation*}
Hence
\begin{multline*}
\kappa_M \leq \sum_{k=0}^\infty (M+k+1) \mathbb{P}\bigg( \norm{X}_T^* \geq (M+k)\bigg)
\leq \sum_{k=0}^\infty\exp\left( -2\delta(M+k)^2\right)
\leq \exp( -\delta M^2),
\end{multline*}
for $M_0$ sufficiently large. 
\end{proof}
\section{Numerical simulation of the mean-field equation and discussion}\label{numeric}
In this section we perform a numerical simulation of the mean-field equations \eqref{meanelec1}.
 
We consider two populations of neurons, with the first one denoted by $\alpha$  and the second
 one denoted by $\beta$. The ratio of neurons in the populations is fixed at 1:1, with $s_1 = s_2 = 0$.
 We take the initial condition $X_{ini} = (0.2,-0.35)$, the currents are fixed at $I^\alpha = 0.2$, $I^{\beta}=-0.2$,
 the noise intensity is $\sigma^\alpha = \sigma^\beta = 0.2$, and the sigmoid function is $S(x) = (1+\exp(-x))^{-1}$. 
The synaptic weights are set to
 $J^{\alpha\alpha} = -0.11, J^{\alpha\beta} = -1.1, J^{\beta\alpha} = 0.44$ and $J^{\beta\beta} = -0.11$.
 The results are displayed in Figure 1.
\begin{figure} 
\includegraphics[width=12cm,height=6cm]{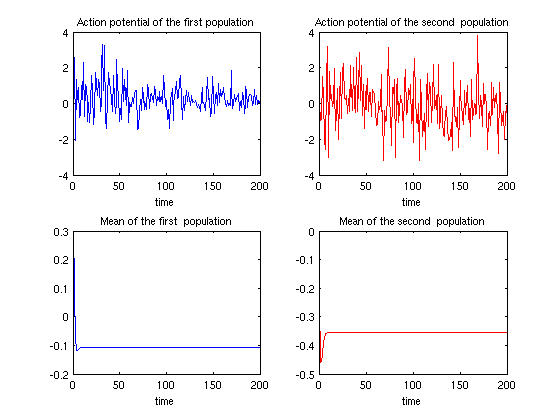}
\label{figure1}
\caption{In the bottom two figures we plot the evolution of the means $m^\alpha$ and $m^\beta$ of the mean-field equations. 
The covariances $Q^{\alpha\alpha}_{00}$ and $Q^{\beta\beta}_{00}$ are not plotted but display a similar
 asymptotic convergence in time. In the top two figures we plot Monte-Carlo simulations of $\bar{X}^{\alpha,0}$ and $\bar{X}^{\beta,0}$.}
\end{figure}

As we noted in the introduction, mean-field 
equations are useful because they reduce the dynamics of large-scale ensembles of neurons to a scale which is more easily
 studied. We have seen that the mean-field equations for our system reduce to a system of ordinary differential equations in
 the mean and covariance. One can therefore apply dynamical systems techniques (in particular, bifurcation analysis)
 to study the properties of the mean-field equations. The above example demonstrates an ergodic convergence in time. 
 However one could straightforwardly apply the theory in this paper to investigate more complicated examples. For example, 
one could search for two-population activator-inhibitor systems where the means and covariances oscillate in time 
(a well-known result). One could also investigate systems of three or more populations of neurons and search for other types of bifurcations. We plan to perform a detailed bifurcation analysis of the mean-field equations in future work.

\end{document}